\newtheorem{theorem}{Theorem}
\newtheorem{proposition}[theorem]{Proposition}
\newtheorem{lemma}[theorem]{Lemma}
\newtheorem{corollary}[theorem]{Corollary}
\newcommand{\R}{\mathbb{R}}
\DeclareMathOperator{\e}{e}
\DeclareMathOperator{\ii}{i}
\title[Limit cycles from infinity in planar PWLS]{Limit cycles from a monodromic infinity \\ in planar piecewise linear systems}
\author[E. Freire]{Emilio Freire}
\address{Departamento Matem\'{a}tica Aplicada II, E.T.S. Ingenier\'{\i}a, Universidad de Sevilla, Camino de los Descubrimientos, 41092 Sevilla (Spain)}
\email{efrem@us.es}
\author[E. Ponce]{Enrique Ponce}
\address{Departamento Matem\'{a}tica Aplicada II, E.T.S. Ingenier\'{\i}a, Universidad de Sevilla, Camino de los Descubrimientos, 41092 Sevilla (Spain)}
\email{eponcem@us.es}
\author[J. Torregrosa]{Joan Torregrosa}
\address{Departament de Matem\`{a}tiques, Universitat Aut\`{o}noma de Barcelona, 08193 Bellaterra, Barcelona (Spain); Centre de Recerca Matem\`{a}tica, Campus de Bellaterra, 08193 Bellaterra, Barcelona (Spain)}
\email{torre@mat.uab.cat}
\author[F. Torres]{Francisco Torres}
\address{Departamento Matem\'{a}tica Aplicada II, E.T.S. Ingenier\'{\i}a, Universidad de Sevilla, Camino de los Descubrimientos, 41092 Sevilla (Spain)}
\email{ftorres@us.es}
\subjclass[2010]{Primary: 37G15, 34C07; Secondary: 34C25, 34C23}
\keywords{Planar piecewise linear systems; Bifurcation from infinity; Limit cycles; Centers}
\begin{document}

\begin{abstract}
	Planar piecewise linear systems with two linearity zones separated by a straight line and with a periodic orbit at infinity are considered. By using some changes of variables and parameters, a reduced canonical form with five parameters is obtained. Instead of the usual Bendixson transformation to work near infinity, a more direct approach is introduced by taking suitable coordinates for the crossing points of the possible periodic orbits with the separation straight line. The required computations to characterize the stability and bifurcations of the periodic orbit at infinity are much easier. It is shown that the Hopf bifurcation at infinity can have degeneracies of co-dimension three and, in particular, up to three limit cycles can bifurcate from the periodic orbit at infinity. This provides a new mechanism to explain the claimed maximum number of limit cycles in this family of systems. The centers at infinity classification together with the limit cycles bifurcating from them are also analyzed.
\end{abstract}

\maketitle

\section{Introduction and main results}\label{se:1}

The analysis of piecewise linear systems is nowadays an active field of research since certain modern devices are
well-modeled by this class of systems, see \cite{dBBCKN08}. Even for the simplest situation, as is the case of the aggregation of two planar linear systems, there are still unsolved problems; for instance, it is known that such discontinuous piecewise linear systems can have three limit cycles (see, for instance, \cite{BuzPesTor2013,FPT14,HuanYang12, LP12,LTT13}) but we still do not know if three is indeed the maximum number for them. 

\medskip

In the analysis of the dynamical richness in a differential system, an interesting source of knowledge comes from the study of all possible bifurcations that the system can undergo. Furthermore, it should be emphasized the relevance of including in such a bifurcation study the possible bifurcations from infinity. Here, we explore the information on the maximum number of limit cycles that can be obtained by studying the periodic orbit at infinity and its possible bifurcations in the mentioned family of planar discontinuous piecewise linear systems with two zones separated by a straight line. Bifurcations from infinity for planar piecewise linear differential systems have been analyzed before in \cite{LP99}, and more recently in \cite{GLN15}. In \cite{LP99} only continuous cases with two zones and three zones with symmetry were considered. For the two-zones case, only one bifurcating limit cycle was detected, according to the well-known fact that there can be only one limit cycle in the class of continuous planar piecewise linear differential systems with two zones separated by a straight line, see \cite{FPRdT98}. In \cite{GLN15}, the bifurcation from infinity is addressed for the case of discontinuous piecewise linear differential systems, by perturbing in a non-symmetric way the canonical continuous linear center $(\dot x,\dot y)=(-y,x)$, allowing for different linear perturbations in the half-planes $y<0$ and $y>0$. Again, only one bifurcating limit cycle was obtained. In both cases, the technical procedure for the analysis takes advantage of the Bendixson transformation. Such a technique is also used in the recent work \cite{BMT20}, where a different family of piecewise linear systems with a symmetric 4-star structure is analyzed. There, after a rather involved computational work, the quoted authors show that up to five limit cycles can bifurcate from infinity.

\medskip

In this paper we propose an alternative and more direct way to work near infinity so that it is possible to build without excessive computational effort a Poincar\'e-like return map that allows to characterize in a complete way the periodic orbit at infinity, discriminating several cases where such orbits belong to a period annulus and the cases where such a periodic orbit behaves like a weak-focus. It is shown that the maximum order for the infinity being a weak-focus is three, so we also show that up to three limit cycles bifurcate from infinity. This achievement is very relevant because we provide a new mechanism to generate the supposedly maximum number of limit cycles for the family via just a local analysis. In fact, the phenomenon could be termed a \emph{degenerate Hopf bifurcation} at infinity.

\medskip

Effectively, to explain the existence of three limit cycles on discontinuous piecewise linear differential systems with two zones separated by a straight line, different mechanisms have been proposed. In \cite{BM13} authors propose a degenerate boundary equilibrium bifurcation of non-smooth Hopf-like type to pass from a configuration without periodic orbits to another with three limit cycles. In \cite{BuzPesTor2013} they appear perturbing the harmonic oscillator via the piecewise averaging technique of high-order. In \cite{CLNT20} they are obtained by perturbing a global center with a different piecewise linear system in each zone. In \cite{FPT14}, starting from a situation possessing one limit cycle coexisting with a boundary focus, two new limit cycles are obtained by taking advantage of the boundary focus unfolding.

\medskip

We emphasize that our alternative formulation of the closing equations, whose local analysis near infinity is the subject of this work, might be useful to get the upper bound for the total number of limit cycles in the family of systems under study. This should be the subject of future work. The existence of such upper bound has been proved only for some special non-generic classes; see for instance \cite{LZ18}, where focus type dynamics is not allowed.

\medskip

We start our analysis by assuming without loss of generality that the two regions in the phase plane are the left and right half-planes,
\[
S_L=\{(x,y)\in\R^2: x < 0\},    \quad  S_R=\{(x,y)\in\R^2: x > 0\},
\]
separated by the straight line $\Sigma = \{(x,y)\in\R^2:x = 0\}$.
The systems to be studied become
\[
\mathbf{\dot{x}} = 
\begin{cases}
      A_L\mathbf{x}+\mathbf{b}_L,& \text{ if } x\in S_L \cup \Sigma,\\
      A_R\mathbf{x}+\mathbf{b}_R,& \text{ if } x\in S_R,
\end{cases}
\]
where $\mathbf{x}=(x,y)^{T}\in\mathbb{R}^{2}$, $A_{L}=(a_{ij}^{L})$ and $A_{R}=(a_{ij}^{R})$ are $2\times2$ constant matrices with real coefficients and $\mathbf{b}_L=(b_{1}^{L},b_{2}^{L})^{T},$ $\mathbf{b}_{R}=(b_{1}^{R},b_{2}^{R})^{T}$ are constant vectors in $\mathbb{R}^{2}.$ Over the separation line $\Sigma$ we define the vector field using the Filippov convention, see \cite{Fil1988}. As we have commented before, we are interested only in solutions near the periodic orbit at infinity.  Under the generic condition $a_{12}^L a_{12}^R > 0$, orbits sufficiently far from the origin cross the discontinuity line, allowing the existence of periodic orbits living in both half-planes. These kind of orbits are usually called of crossing type. Under such a generic condition, the points in $\Sigma$ that cannot be part of a crossing orbit, i.e. sliding or escaping ones, where Filippov convention is necessary, form a bounded set.

\medskip

Therefore, by using a similar approach to the one followed in getting Proposition~3.1 of
\cite{FPT12} and denoting $\Lambda\in\{L,R\}$, we obtain the new canonical form 
\begin{equation}\label{eq:1}
        \mathbf{\dot{x}} =
        \left(\begin{array}{cr}
            T_\Lambda & -1\\
            D_\Lambda & 0
        \end{array}\right)
        \mathbf{x} -
        \left(\begin{array}{c}
            b_\Lambda\\
            a_\Lambda
        \end{array}\right)
        \text{ if } \mathbf{x} \in S_\Lambda,
\end{equation}
with $T_{\Lambda} = \operatorname{tr}(A_{\Lambda})$ and $D_{\Lambda} = \det(A_{\Lambda})$ are the linear invariants in each zone and $b_L=b$ and $b_R=-b$.

\medskip

The canonical form \eqref{eq:1} has seven parameters; apart from the mentioned linear invariants, we find two parameters $a_{L}, a_{R},$ related to the position of equilibria and a parameter $b$ which is responsible for the existence of a sliding set. In fact, the sliding set is the segment joining the points $(0,-b)$ and  $(0,b)$, see \cite{FPT12} for more details. These two endpoints are the tangency points of system \eqref{eq:1}, so that the sliding segment becomes attractive for $b<0$ and repulsive for $b>0$, shrinking to the origin when $b=0$. By computing the sign of $\ddot{x}$ at the tangency points, we obtain
\[
\ddot{x}|_{(x,y)=(0,-b)}=a_{L},\qquad \ddot{x}|_{(x,y)=(0,b)}=a_{R},
\]
so that the left (right) tangency is called visible if $a_{L} < 0$ ($a_{R} > 0$), being invisible if $a_{L} > 0$ ($a_{R} < 0$), see again \cite{FPT12}. Thus, the $a_{\Lambda}$ parameters are related with the location of the equilibria and determine the visibility of the tangencies; when some of them vanish then we have a boundary equilibrium point, see \cite{KRG03,PPT11}.

\medskip

Our main hypothesis will be the monodromy of the point at infinity, that is the existence of a periodic orbit at infinity, which requires to have no equilibrium points there. This is equivalent to ask for having dynamics of focus type in both regions, see \cite{LP99}, namely $T_{\Lambda}^2-4D_{\Lambda} < 0.$ We note that under the above conditions both determinants are positive. 

\medskip

As a preliminary result, necessary to state our main theorems, we introduce a new (symmetric) canonical form, that it will be used in our approach to study limit cycles bifurcating from infinity. 

\begin{proposition}\label{prop:1}
Under the hypotheses $T_{\Lambda}^2 - 4D_{\Lambda} < 0$
(both dynamics are of focus type), system \eqref{eq:1} is
topologically equivalent to system
\begin{equation}\label{eq:2}
\begin{array}{ll}
    \left\{\begin{array}{l}
        \dot{x}=2\gamma_{L} x - y-b, \\ 
        \dot{y}=(1 + \gamma_{L}^{2})x -\alpha_{L},
    \end{array}\right.
      \text{if }x \leq 0,  \text{ and }&
    \left\{\begin{array}{l}
        \dot{x}=2\gamma_{R} x - y +b,\\ 
        \dot{y}=(1 + \gamma_{R}^{2})x-\alpha_{R},
    \end{array}\right.
    \text{if }x > 0,
\end{array}
\end{equation}
where, for each zone $\Lambda\in\{L,R\}$, we introduce the new parameters 
\begin{equation}\label{eq:3} 
\gamma_{\Lambda}= \dfrac{\sigma_{\Lambda}}{\omega_{\Lambda}},\quad \alpha_{\Lambda} =\dfrac{a_{\Lambda}}{\omega_{\Lambda}},
\end{equation}
with $2\sigma_{\Lambda} =T_{\Lambda}$ and $\omega_{\Lambda}> 0$ is such that $4\omega_{\Lambda}^2 = 4D_{\Lambda} -T_{\Lambda}^2$.
\end{proposition}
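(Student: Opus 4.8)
The plan is to exhibit an explicit orbital (topological) equivalence built from a zone-dependent linear scaling together with a zone-dependent, orientation-preserving rescaling of time. First I would record the spectral picture implied by the hypothesis: in each zone $\Lambda\in\{L,R\}$ the matrix of \eqref{eq:1} has trace $T_\Lambda$ and determinant $D_\Lambda$, so under $T_\Lambda^2-4D_\Lambda<0$ its eigenvalues are $\sigma_\Lambda\pm\ii\,\omega_\Lambda$ with $\sigma_\Lambda=T_\Lambda/2$ and $\omega_\Lambda=\sqrt{D_\Lambda-\sigma_\Lambda^2}>0$; note this forces $D_\Lambda=\sigma_\Lambda^2+\omega_\Lambda^2>0$, in agreement with the remark in the text. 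With the parameters of \eqref{eq:3} one then has the two algebraic identities that drive everything:
\begin{align*}
\frac{T_\Lambda}{\omega_\Lambda} &= \frac{2\sigma_\Lambda}{\omega_\Lambda}=2\gamma_\Lambda, \\
\frac{D_\Lambda}{\omega_\Lambda^2} &= \frac{\sigma_\Lambda^2+\omega_\Lambda^2}{\omega_\Lambda^2}=1+\gamma_\Lambda^2 .
\end{align*}

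Next I would propose the change of variables and time
\[
X=\omega_\Lambda\, x,\qquad Y=y,\qquad \tau=\omega_\Lambda\, t \qquad\text{in the zone }S_\Lambda,
\]
which keeps $\Sigma=\{x=0\}$ invariant and leaves the $y$-coordinate untouched. Substituting $x=X/\omega_\Lambda$, $y=Y$ into \eqref{eq:1} and dividing by $\omega_\Lambda$ to pass from $t$ to $\tau$, a direct computation using the two identities above yields
\[
\frac{dX}{d\tau}=2\gamma_\Lambda X-Y-b_\Lambda,\qquad
\frac{dY}{d\tau}=(1+\gamma_\Lambda^2)X-\alpha_\Lambda ,
\]
where the constant $b_\Lambda$ in the first equation is \emph{reproduced unchanged} (so $b_L=b$, $b_R=-b$ are exactly the constants appearing in \eqref{eq:2}), while the constant in the second equation becomes $a_\Lambda/\omega_\Lambda=\alpha_\Lambda$. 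This is precisely system \eqref{eq:2}. Equivalently, writing $h$ for the map that acts as $(x,y)\mapsto(\omega_\Lambda x,y)$ on $S_\Lambda$, one checks $Dh\cdot v=\omega_\Lambda\, w\circ h$, where $v$ and $w$ are the right-hand sides of \eqref{eq:1} and \eqref{eq:2}; since $\omega_\Lambda>0$, the field $v$ is sent to a strictly positive multiple of $w$, which is exactly what orbital equivalence requires.

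The step that deserves genuine care, and which I regard as the main (albeit mild) obstacle, is justifying that these \emph{zone-dependent} scalings define a single legitimate topological equivalence of the planar Filippov system rather than two unrelated transformations. The homeomorphism candidate $h$ equals $(\omega_L x,y)$ on $S_L$ and $(\omega_R x,y)$ on $S_R$; I would verify that although $\omega_L\neq\omega_R$ in general, the two pieces agree on $\Sigma$ (there $x=0$ forces $X=0$, and $Y=y$ is common), so $h$ is continuous, bijective with continuous inverse, orientation-preserving, and fixes $\Sigma$ pointwise. The different time factors $\omega_\Lambda>0$ are harmless for topological (as opposed to $C^0$-conjugate) equivalence, since they only reparametrize each orbit while preserving its orientation. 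Finally I would confirm that the Filippov structure on $\Sigma$ is respected: the tangency points $(0,\mp b)$ are fixed by $h$, the first components of $v$ and $w$ coincide on $\Sigma$ (both equal $-y-b_\Lambda$ there, as $X=0$), so the sign of $\dot x$ and hence the crossing, sliding and tangency sets are preserved. This closes the argument.
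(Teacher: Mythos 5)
Your proof is correct and follows essentially the same route as the paper: the zone-dependent scaling $X=\omega_\Lambda x$, $Y=y$, $\tau=\omega_\Lambda t$ together with the identities $T_\Lambda/\omega_\Lambda=2\gamma_\Lambda$ and $D_\Lambda/\omega_\Lambda^2=1+\gamma_\Lambda^2$ is exactly the paper's argument. Your final paragraph verifying that the piecewise-defined map glues continuously along $\Sigma$ and respects the Filippov structure makes explicit a point the paper only touches on (``we do not change the coordinate $y$, so that periodic orbits using both half-planes are preserved''), which is a welcome addition but not a different approach.
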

We have reduced by two the number of parameters in \eqref{eq:1} but, what is more important, we make patent the intrinsic features of the dynamics in each region. Effectively, the eigenvalues for the foci in \eqref{eq:2} are now $\gamma_{\Lambda}\pm \ii$, so that the natural frequencies are scaled to $1$ for both dynamics and the dynamical expansion or contraction for each focus depends only on the coefficients $\gamma_{\Lambda},$  being again $\Lambda\in\{L,R\}.$

\medskip
A direct consequence of the above proposition is the characterization of the continuity for system \eqref{eq:2}, leading to a new reduced canonical form with only three free parameters.
\begin{corollary}\label{coro:2}
System \eqref{eq:2} becomes continuous if and only if $b=0$ and $\alpha_L=\alpha_R.$ Hence, it writes as 
\begin{equation*}
\begin{array}{ll}
\left\{\begin{array}{l}
\dot{x}=2\gamma_{L} x - y, \\ 
\dot{y}=(1 + \gamma_{L}^{2})x -\alpha,
\end{array}\right.
\text{if }x \leq 0,  \text{ and }&
\left\{\begin{array}{l}
\dot{x}=2\gamma_{R} x - y,\\ 
\dot{y}=(1 + \gamma_{R}^{2})x-\alpha,
\end{array}\right.
\text{if }x \ge 0,
\end{array}
\end{equation*}
being $\alpha$ the common value for the non-homogeneous terms.
\end{corollary}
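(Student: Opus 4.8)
The plan is to read off continuity directly from the definition: the piecewise field in \eqref{eq:2} is continuous exactly when the two affine pieces, the one valid for $x\le 0$ and the one valid for $x>0$, coincide on their common boundary $\Sigma=\{x=0\}$. Since each piece is affine in $(x,y)$ and every point of $\Sigma$ has the form $(0,y)$, I would simply evaluate both fields at $x=0$ and compare the outcomes as functions of the free variable $y$.

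Carrying this out, the left field restricted to $\Sigma$ reads $(\dot x,\dot y)=(-y-b,\,-\alpha_L)$, while the right field gives $(\dot x,\dot y)=(-y+b,\,-\alpha_R)$. The decisive observation is that every term carrying a factor $x$ vanishes on $\Sigma$, and the only term involving $y$ is the coefficient $-1$ in the first component, which is identical in both zones; hence the $y$-dependence automatically matches and no condition on $\gamma_L$ or $\gamma_R$ can arise. What remains are the constant terms, so continuity on $\Sigma$ is equivalent to the two identities $-b=b$ and $-\alpha_L=-\alpha_R$ holding for every $y$.

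These identities hold if and only if $b=0$ and $\alpha_L=\alpha_R$, which establishes the claimed equivalence. Substituting $b=0$ and denoting by $\alpha$ the common value $\alpha_L=\alpha_R$ in \eqref{eq:2} then yields precisely the displayed reduced form; note that once the two pieces coincide on $\Sigma$, the boundary $x=0$ may be attached to either of them, which is why the left region may now be written as $x\le 0$ and the right region as $x\ge 0$.

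I expect no genuine obstacle here: the entire content is the structural remark that, because $x$ vanishes on $\Sigma$, matching the two affine fields collapses to matching only their constant terms, so continuity imposes no hidden constraint beyond $b=0$ and $\alpha_L=\alpha_R$. The one point worth stating carefully is the necessity direction, namely that agreement must hold for \emph{all} $y\in\R$, which is what forces the constant terms to coincide rather than merely at isolated points.
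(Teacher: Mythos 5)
Your proof is correct and matches the paper's intent: the paper omits an explicit proof precisely because the corollary follows by the direct check you perform, namely equating the two affine fields of \eqref{eq:2} on $\Sigma=\{x=0\}$, where all $x$-terms vanish and only the constants $\mp b$ and $-\alpha_L$, $-\alpha_R$ must match for all $y$. Your remark on the necessity direction (agreement for all $y$) is exactly the point that makes the equivalence rigorous.
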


\medskip

Before stating our first result about the characterization of the existence of a period annulus at infinity for system \eqref{eq:2}, we recall the notion of time-reversibility with respect to straight lines. Whenever a planar system is invariant under the change $(x,y,\tau)\mapsto (x,-y,-\tau)$ or $(x,y,\tau)\mapsto (-x,y,-\tau),$ we say that is time-reversible with respect to the $x$-axis or $y$-axis, respectively.

\begin{theorem} \label{thm:3}
System \eqref{eq:2} has a center (period annulus) at infinity if and only if it is time-reversible with respect to $y=0$ or $x=0.$

The centers have a time-reversibility with respect to $y=0$  if and only if $b=0$ and  $\gamma_L=\gamma_R=0.$   The centers have a time-reversibility with respect to $x=0$  if and only if $b=0,$ $\gamma_L=-\gamma_R\ne0$ and either $\alpha_L=\alpha_R=0$ or $\alpha_L=-\alpha_R\ne0$.
\end{theorem}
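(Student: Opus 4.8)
The plan is to construct an explicit return map on the line of discontinuity $\Sigma=\{x=0\}$ near infinity and characterize when it is the identity. Following the strategy announced in the introduction, I would not use the Bendixson transformation but rather parametrize the crossing points of a candidate periodic orbit with $\Sigma$ by a variable $y$ (or better a coordinate adapted to infinity, such as $1/y$), and track the orbit through the right half-plane from an entry point to an exit point, then symmetrically through the left half-plane, composing the two half-return maps. The center-at-infinity condition is precisely the requirement that this full return map be the identity for all sufficiently large $|y|$; detecting a period annulus rather than isolated periodic orbits means asking for this to hold on an interval, not merely at one point.

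\medskip

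First I would establish the sufficiency (the easy direction) by the time-reversibility argument. If the full system \eqref{eq:2} is invariant under $(x,y,\tau)\mapsto(x,-y,-\tau)$ or $(x,y,\tau)\mapsto(-x,y,-\tau)$, then any orbit crossing the relevant symmetry axis transversally at two points must close up, forcing a period annulus; so I would simply read off from \eqref{eq:2} the algebraic constraints on the parameters under which each reversal leaves the system invariant. For the $y$-axis reversal $(x,y,\tau)\mapsto(-x,y,-\tau)$ one must swap the roles of the two half-planes, which immediately suggests the conditions $\gamma_L=-\gamma_R$ together with a matching of the nonhomogeneous terms; for the $x$-axis reversal $(x,y,\tau)\mapsto(x,-y,-\tau)$ one needs the field to be odd in $y$ in the appropriate sense, pinning down $b=0$ and $\gamma_L=\gamma_R=0$. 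Checking these invariances is a direct substitution into \eqref{eq:2} and comparing coefficients, and it should reproduce exactly the two families listed in the statement, including the subdivision $\alpha_L=\alpha_R=0$ versus $\alpha_L=-\alpha_R\ne0$ coming from whether the equilibria themselves sit symmetrically.

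\medskip

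The harder direction is necessity: showing that a center at infinity forces one of these reversibilities. Here I would work with the explicit half-return maps. Because each zone is linear of focus type with eigenvalues $\gamma_\Lambda\pm\ii$, the flow in each half-plane can be integrated in closed form, and the transition between entry and exit points on $\Sigma$ is governed by the elapsed angle together with an exponential factor $\e^{\gamma_\Lambda t}$. The crucial asymptotic observation is that near infinity the relevant half-orbit sweeps a half-turn, so the contraction/expansion factor tends to $\e^{\gamma_\Lambda \pi}$; the composed return map near infinity therefore carries a leading factor $\e^{(\gamma_L+\gamma_R)\pi}$. Requiring the return map to be the identity on an interval (not just to have a fixed point) forces, as a necessary condition, $\gamma_L+\gamma_R=0$, and then I would expand the return map in the coordinate $1/y$ and demand that all higher-order asymptotic coefficients vanish identically. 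The main obstacle I anticipate is organizing this asymptotic expansion and proving that the vanishing of these coefficients is equivalent to the precise parameter conditions in the theorem, rather than merely implying $\gamma_L+\gamma_R=0$; in particular one must carefully separate the degenerate subcase $\gamma_L=\gamma_R=0$ (forcing the $x$-axis symmetry through a different route, since the exponential factors are trivial) from the genuinely hyperbolic-focus subcase $\gamma_L=-\gamma_R\ne0$, and within the latter track how $b$, $\alpha_L$, and $\alpha_R$ enter the lower-order terms so as to recover both $b=0$ and the dichotomy on the $\alpha_\Lambda$.

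\medskip

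To make the necessity argument clean I would exploit the five-parameter canonical form to reduce the bookkeeping: once $\gamma_L+\gamma_R=0$ is forced, I would rescale and translate so that the remaining conditions reduce to checking parity of a small number of expansion coefficients, and I would cross-check the resulting parameter conditions against the sufficiency computation to confirm that the two lists coincide exactly. The expectation is that the reversibility conditions are not only sufficient but are recovered as the unique solution set of the infinite system of coefficient equations for the identity return map, thereby closing the equivalence.
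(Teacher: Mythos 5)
Your proposal follows essentially the same route as the paper: sufficiency via the two time-reversibilities, and necessity by expanding the composed half-return maps in the coordinate $1/y$ near infinity, where the leading factor forces $\gamma_L+\gamma_R=0$ and the subsequent coefficients (the paper needs only $\Delta_2,\Delta_3,\Delta_4$) pin the parameters into exactly the reversible families, closing the equivalence by cross-checking against sufficiency. The paper's proof of Theorem~\ref{thm:5} is precisely this plan carried out, so your approach is sound and matches it.
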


The proof of this result is a direct consequence of a more complete one, where we detail also the global qualitative behavior, see Theorem~\ref{thm:5}. Its proof is based upon the derivation of an adequate Poincar\'e return map that allows to study a neighborhood of infinity as if it were a standard monodromic point. This idea has been used many times, see \cite{BMT20,CHYH18, GLN15,LLiuYu2017,LP99}, by resorting to the Bendixson transformation; the computations become rather involved since, as shown later, to `determine' the stability of the equilibrium point one needs to compute derivatives of high-order of such a Poincar\'e map. Here, we exploit an alternative and more convenient approach, by introducing a new suitable coordinate $u_0$ associated to one of the two intersection points of the periodic orbit with the separation straight line,  the value $u_0=0$ corresponding to the periodic orbit at infinity. Thus, we are able to compute much more easily a displacement map in the form
\begin{equation}\label{eq:4}
\Delta(u_0)=\Delta_1 u_0+\Delta_2 u_0^2+\Delta_3 u_0^3+\Delta_4 u_0^4+\cdots,
\end{equation}
for $u_0>0$ and small, such that its positive zeros have a one-to-one correspondence with periodic orbits near infinity. Furthermore, the coefficients $\Delta_i$ determine the stability and the weak-focus or center character of the periodic orbit at infinity. In particular, when there exists a period annulus at infinity then we can say that the infinity behaves like a center and all the above coefficients vanish. The reciprocal statement is also true. When $\Delta_1=0$ the periodic orbit at infinity is non-hyperbolic and then, provided that the first non-vanishing coefficient in the above expansion is $\Delta_i$, we say that the periodic orbit at infinity behaves like a weak-focus of order $i-1$. Thus, our second main result assures that the maximum order of the periodic orbit at infinity when it behaves like a weak-focus is three, see Section~\ref{se:5} for a proof.

\begin{theorem}\label{thm:4}
	For system \eqref{eq:2}, the periodic orbit at infinity is hyperbolic and stable (unstable) whenever $\gamma_L+\gamma_R>0$ $(\gamma_L+\gamma_R<0)$. When $\gamma_L+\gamma_R=0$ the periodic orbit at infinity is non-hyperbolic so that it behaves like a weak-focus or a center. The possible weak-focus orders are only $1,2,$ and $3$ and there exist perturbations such that the system exhibits $1,2,$ and $3$ limit cycles of big amplitude, respectively.
\end{theorem}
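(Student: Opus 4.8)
The plan is to build the displacement map $\Delta(u_0)$ described in~\eqref{eq:4} by explicit integration of the two linear flows and to read off the bifurcation structure from its Taylor coefficients $\Delta_i$. First I would fix the coordinate $u_0$ as the author suggests: since both zones carry foci with eigenvalues $\gamma_\Lambda \pm \ii$, a crossing periodic orbit near infinity meets $\Sigma=\{x=0\}$ at two points; I would parametrize one of them so that $u_0=0$ corresponds exactly to the orbit at infinity (geometrically, $u_0$ should be something like the reciprocal of the distance to the origin, so that small $u_0>0$ captures the neighborhood of infinity). The half-return map in each zone can be computed in closed form: in the region $x\le 0$ the solution of~\eqref{eq:2} is, after subtracting the focus equilibrium, a logarithmic spiral with rotation angle $\pi$ (a half turn between two consecutive crossings of $\Sigma$), whose effect is a multiplication by $\e^{\gamma_L \pi}$ together with an affine shift coming from the non-homogeneous terms $b,\alpha_L$; symmetrically for $x>0$ with $\gamma_R$ and $\alpha_R$. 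Composing the left and right half-maps gives the full Poincar\'e map, and $\Delta(u_0)$ is its displacement from the identity.

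The leading coefficient governs hyperbolicity. I expect $\Delta_1$ to be, up to a positive factor, $\e^{(\gamma_L+\gamma_R)\pi}-1$, so $\Delta_1$ has the sign of $\gamma_L+\gamma_R$. This immediately yields the first assertion of the theorem: the orbit at infinity is hyperbolic and stable when $\gamma_L+\gamma_R<0$ contracts\ldots —here I must be careful about the orientation of $u_0$, but with $u_0$ measuring closeness to infinity the stable case corresponds to $\gamma_L+\gamma_R>0$ as stated, and the sign of $\Delta_1$ flips accordingly. Thus $\Delta_1=0$ exactly when $\gamma_L+\gamma_R=0$, which is the non-hyperbolic regime where the weak-focus analysis lives.

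On the critical manifold $\gamma_L+\gamma_R=0$ I would set $\gamma:=\gamma_R=-\gamma_L$ and expand $\Delta(u_0)$ further. The substantive content is computing $\Delta_2,\Delta_3,\Delta_4$ as explicit functions of the remaining parameters $(\gamma,\alpha_L,\alpha_R,b)$ and showing: (i) when the center conditions of Theorem~\ref{thm:3} hold, all $\Delta_i$ vanish, consistent with the period annulus; (ii) off those conditions, the first nonzero coefficient among $\Delta_2,\Delta_3,\Delta_4$ determines the weak-focus order $1,2,3$; and (iii) $\Delta_5$ and all higher coefficients are forced to vanish whenever $\Delta_2=\Delta_3=\Delta_4=0$, so that order three is genuinely the maximum. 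Step (iii) is the heart of the matter and the main obstacle: one must prove that the simultaneous vanishing of the first three nonlinear coefficients already implies a center, i.e. that $\{\Delta_2=\Delta_3=\Delta_4=0\}$ cuts out exactly the time-reversible locus from Theorem~\ref{thm:3}. I would attack this by treating $\Delta_2=\Delta_3=\Delta_4=0$ as a polynomial (or analytic) system in the parameters and showing its real solution set coincides with the two reversibility families $b=\gamma=0$ and ($b=0$, $\gamma\neq0$, $\alpha_L=\pm\alpha_R$); a resultant or elimination computation, or a direct factorization exploiting the symmetry of the half-maps under $\gamma\mapsto-\gamma$, should do it.

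Finally, to produce the limit cycles I would argue by successive degenerate Hopf unfolding at infinity. Having realized weak-foci of orders $1,2,3$ through explicit parameter choices, I would perturb the three independent quantities $\Delta_2,\Delta_3,\Delta_4$ in sequence—starting from a genuine order-three configuration where $\Delta_2=\Delta_3=\Delta_4=0$ but $\Delta_5\neq0$ would contradict maximality, so instead from $\Delta_4\neq0$, $\Delta_2=\Delta_3=0$—and break the degeneracies one at a time with sign-alternating increments. Provided the Jacobian of $(\Delta_2,\Delta_3,\Delta_4)$ with respect to a suitable three-parameter slice (say $(\gamma,\alpha_L-\alpha_R,b)$ or an equivalent transversal) is nonsingular at the degenerate point, each sign change creates one simple positive zero of $\Delta(u_0)$, hence one large-amplitude limit cycle, yielding $1$, $2$, and $3$ limit cycles respectively. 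Verifying this transversality (independence of the three coefficients as functions of the unfolding parameters) is the remaining technical point, but it is a finite linear-algebra check once the explicit expressions for $\Delta_2,\Delta_3,\Delta_4$ from the earlier steps are in hand.
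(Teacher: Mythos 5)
Your overall strategy coincides with the paper's: parametrize the crossing points by $u_0\sim 1/y_0$, expand the displacement map, show $\operatorname{sign}\Delta_1=\operatorname{sign}(\gamma_L+\gamma_R)$ (this is Lemma~\ref{lem:6}), and, on the critical manifold, show that the vanishing of $\Delta_2,\Delta_3,\Delta_4$ forces precisely the reversible center configurations of Theorem~\ref{thm:3}, so that order three is maximal; the paper does exactly this through \eqref{eq:19}--\eqref{eq:22}, using closing equations plus the Implicit Function Theorem instead of explicit spiral integration, which is only a computational difference. One small inaccuracy in that part: the reversible locus with $\gamma_L=-\gamma_R\ne0$ is $b=0$ together with either $\alpha_L=\alpha_R=0$ or $\alpha_L=-\alpha_R\ne0$; your ``$\alpha_L=\pm\alpha_R$'' is too generous, since $b=0$, $\gamma_L=-\gamma_R\ne0$, $\alpha_L=\alpha_R\ne0$ gives $y_L-y_R=4\gamma_Lx_L\ne0$, i.e.\ a weak focus of order one, not a center.

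The genuine gap is in your last step, the production of the three limit cycles. You propose to perturb, and to check transversality of, the triple $(\Delta_2,\Delta_3,\Delta_4)$. That is the wrong triple. At the order-three configuration one has $\Delta_1=\Delta_2=\Delta_3=0$ and $\Delta_4\ne0$; $\Delta_4$ is not a degeneracy to be broken but the quantity that must merely persist, which it does by continuity. The coefficients that must be unfolded independently, with alternating signs, are $\Delta_3$, then $\Delta_2$, then $\Delta_1$. In particular, the third cycle cannot appear unless $\Delta_1$ is made nonzero, i.e.\ unless the condition $\gamma_L+\gamma_R=0$ is itself broken: if you stay on that manifold, then $\Delta(u_0)=u_0^2\left(\Delta_2+\Delta_3u_0+\Delta_4u_0^2+\cdots\right)$ and at most two small positive zeros can be created, no matter how $\Delta_2,\Delta_3,\Delta_4$ are moved. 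Consequently, the transversality to verify is the nonsingularity of the Jacobian of $(\Delta_1,\Delta_2,\Delta_3)$ with respect to three parameters, one of which must be transverse to $\{\gamma_L+\gamma_R=0\}$. This is exactly what the paper computes in Lemma~\ref{lem:9}: with parameters $(\gamma_R,b,x_R)$ at the critical point $(-\gamma_L,2\gamma_Lx_L,x_L)$, the entry $\partial\Delta_1/\partial\gamma_R=\pi\e_L^{-}\ne0$ supplies the needed $\Delta_1$-direction, the $2\times2$ block in $(b,x_R)$ is triangular and nonsingular, and then the Weierstrass preparation $\Delta=(\delta_1u+\delta_2u^2+\delta_3u^3+u^4)\widetilde{\Delta}$ yields the three simple positive zeros. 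With your triple replaced by $(\Delta_1,\Delta_2,\Delta_3)$ and that Jacobian actually computed, your argument closes.
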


We notice that it is the first time that in this family of systems the associated Hopf bifurcation is shown to be up to of co-dimension three; furthermore, it is proved that up to three limit cycles can bifurcate from infinity. 

\medskip

This paper is structured as follows. Section~\ref{se:2} presents apart from some properties satisfied by system \eqref{eq:2}, other canonical forms associated to system~\eqref{eq:1}. Proposition~\ref{prop:1} is proved also here. How are the half-return maps near infinity and the computation of the coefficients of the displacement function \eqref{eq:4} are done in Section~\ref{se:3}. The center characterization result, Theorem~\ref{thm:3}, is shown with more details through Theorem~\ref{thm:5} in Section~\ref{se:4}. In Section~\ref{se:5} we get the different possible weak-focus orders and the corresponding limit cycles bifurcation that the periodic orbit at infinity can have, see Theorems~\ref{thm:7} and \ref{thm:8}, jointly leading to Theorem~\ref{thm:4}. The limit cycles near infinity that bifurcate from the centers are studied in Section~\ref{se:6}, see Propositions~\ref{prop:10}, \ref{prop:11}, and \ref{prop:12}. Finally, Section~\ref{se:7} deals with an explicit example where the three limit cycles that bifurcate from infinity are numerically shown.

\section{About the canonical forms}\label{se:2}
In this paper we basically work with the canonical form \eqref{eq:2} but some other equivalent forms are also interesting. First we introduce some notation and properties on the equilibrium points of \eqref{eq:2} which are of focus type:
\begin{equation}\label{eq:5}
	(x_L,y_L) =(x_L,2\gamma_Lx_L-b)=\left( \dfrac{\alpha_{L}}{1 + \gamma_{L}^{2}},\dfrac{2\alpha_{L}\gamma_{L}}{1 + \gamma_{L}^{2}}-b\right)
\end{equation}
and
\begin{equation}\label{eq:6}
(x_R,y_R)=(x_R,2\gamma_Rx_R+b)=\left( \dfrac{\alpha_{R}}{1 + \gamma_{R}^{2}},\dfrac{2\alpha_{R}\gamma_{R}}{1 + \gamma_{R}^{2}}+b\right).
\end{equation}
As the vector fields in \eqref{eq:2} are linear, it is clear that the equilibrium points are stable (unstable) for $\gamma_{\Lambda} < 0$ $(\gamma_{\Lambda} >0).$ When $\gamma_{\Lambda} = 0$, we have linear centers. Such equilibria will be real when $\alpha_{L}<0$ or $\alpha_{R}>0$, boundary equilibria for $\alpha_{\Lambda}=0$, and virtual ones when $\alpha_{L}>0$ or $\alpha_{R}<0$. 

In terms of the equilibrium coordinates \eqref{eq:5} and \eqref{eq:6}, we can rewrite system  \eqref{eq:2} as follows,
\begin{equation}\label{eq:7}
	\left\{\begin{array}{l}
	\dot{x}=2\gamma_{L} (x-x_L) - (y-y_L), \\ 
	\dot{y}=(1 + \gamma_{L}^{2})(x -x_L),
	\end{array}\right.
	\quad
	\left\{\begin{array}{l}
	\dot{x}=2\gamma_{R} (x-x_R)  - (y-y_R), \\ 
	\dot{y}=(1 + \gamma_{R}^{2})(x -x_R),
	\end{array}\right.  
\end{equation}
for $x\leq0$ and $x>0$, respectively, and note that now the family is described with $6$ parameters, one more than in \eqref{eq:2}. The parameter $b$ has been rewritten after introducing the equilibrium ordinates $y_\Lambda$ and the parameters $\alpha_{\Lambda}$ have been substituted by the corresponding equilibrium abscissas  $x_{\Lambda}$, for $\Lambda\in\{L,R\}.$ We have that 
\begin{equation}\label{eq:8}
	\alpha_{\Lambda}=(1+\gamma_{\Lambda}^2)x_{\Lambda}, \quad b=-(y_L-2\gamma_L x_L)=y_R-2\gamma_R x_R,
\end{equation}
and the last equality gives the condition to be fulfilled by the six parameters in a system \eqref{eq:7} to be equivalent to a system in the form \eqref{eq:2}. However, every system \eqref{eq:7} not fulfilling the last equality in \eqref{eq:8} can be rewritten with a simple translation in the variable $y$ in another equivalent system, already satisfying the mentioned condition. In fact, such a condition amounts to have the origin in the middle of the sliding set, which is a segment in the $y$-axis.

\medskip

Now we can prove our first main result.
\begin{proof}[Proof of Proposition~\ref{prop:1}]
	Under the hypotheses, if we define $\omega_R> 0$ such that $\omega_R^2 = D_{R} - T_{R}^2/4$ and $\sigma_R = T_{R}/2$,
	the eigenvalues of the matrix ruling the dynamics on the half-plane $S_R$ in \eqref{eq:1} are $\sigma_R\pm \ii\omega_R$. Note that  $D_R=\sigma_R^2+\omega_R^2.$
	We make first the change $X = \omega_R x$, $Y = y$, $\tau = \omega_R t$ for the variables in $S_R$, without altering variables and time on the half-plane $S_L$.
	Note that we do not change the coordinate $y$, so that periodic orbits using both half-planes are preserved. Then, for $X>0$ we have
	\[
	\begin{aligned}
	\dfrac{dX}{d\tau} & = \dfrac{1}{\omega_R}\dfrac{dX}{dt} = \dfrac{dx}{dt} = \dfrac{T_{R}X}{\omega_R} - Y+b,\\
	\dfrac{dY}{d\tau} & = \dfrac{1}{\omega_R}\dfrac{dY}{dt} = \dfrac{1}{\omega_R}\dfrac{dy}{dt} = \dfrac{1}{\omega_R}\left(D_{R} \dfrac{X}{\omega_R} - a_{R}\right) = \dfrac{D_{R}}{\omega_R^2}X - \dfrac{a_{R}}{\omega_R}.
	\end{aligned}
	\]
	Introducing the parameter $\gamma_R$ as in \eqref{eq:3}, we see that 
	\[
	\dfrac{T_{R}}{\omega_R} = 2\gamma_R,\quad \dfrac{D_{R}}{\omega_R^2} = \gamma_R^2 + 1.
	\]
	Thus, the new vector field for the right half-plane is as given in the statement with $\alpha_R$ as in \eqref{eq:3}. Doing the analog transformation for the left half-plane we get a similar result, and the proposition is proved.   
\end{proof}

As an intermediate option between the two forms \eqref{eq:2} and \eqref{eq:7}, we can also use the 5-parameter formulation
\begin{equation}\label{eq:9}
	\left\{\begin{array}{l}
	\dot{x}=2\gamma_{L} x - y-b, \\ 
	\dot{y}=(1 + \gamma_{L}^{2})(x -x_{L}),
	\end{array}\right.
	\quad
	\left\{\begin{array}{l}
	\dot{x}=2\gamma_{R} x - y +b,\\ 
	\dot{y}=(1 + \gamma_{R}^{2})(x-x_{R}),
	\end{array}\right.
\end{equation}
for $x\leq0$ and $x>0$, respectively. Regarding the form \eqref{eq:9}, the system becomes invariant under the transformations
\begin{align}
			(x,y,\tau,\gamma_L,x_L,b,\gamma_R,x_R)&\mapsto (-x,y,-\tau,-\gamma_R,-x_R,-b,-\gamma_L,-x_L),\label{eq:10}\\
			(x,y,\tau,\gamma_L,x_L,b,\gamma_R,x_R)&\mapsto (x,-y,-\tau,-\gamma_L,x_L,-b,-\gamma_R,x_R),\notag
\end{align}
and	its composition
\[
(x,y,\tau,\gamma_L,x_L,b,\gamma_R,x_R)\mapsto (-x,-y,\tau,\gamma_R,x_R,b,\gamma_L,x_L).
\]
The new time $\tau$ has been introduced in the proof of Proposition~\ref{prop:1}. 

These properties are useful to reduce the number of configurations to be considered for the analysis of the family. In fact, the parameter $b$ is \emph{modal} in the sense that by means of a homogeneous scaling in the variables $(x,y)$, which also implies to scale accordingly the parameters $(x_L,x_R)$ in \eqref{eq:9}, only the three cases $b=1$ (repulsive sliding segment), $b=0$ (sewing case), and $b=-1$ (attractive sliding segment) should be considered. We will not take advantage of this last observation as we are interested in a bifurcation approach to our problem, which requires as much as possible to modify the parameters in a continuous way.

\section{Half-return maps near infinity}\label{se:3}

The periodic orbits of system \eqref{eq:2} near infinity can be determined from the half-return maps, $L$ and $R$, near infinity on each side. We take a point $(0,y_0)$ with $y_0>0$ as initial point of an orbit for the left system, and integrate the solution forward in time up to arrive again, after approximately a half tour around the focus at $(x_L,y_L)$, to the $y$-axis. The existence of an arrival point of the form $(0,y_1)$ with $y_1<0$ is guaranteed as long as $y_0$ is chosen sufficiently big. Similarly, for the right side, we just integrate the right system backward in time, also starting at the point $(0,y_0)$ and arriving to a point $(0,y_2)$ with $y_2<0.$ These intersection points in the negative vertical axis define the half-return maps $L(u_0)=1/y_1$ and $R(u_0)=u_2=1/y_2$ being $u_0=1/y_0,$ for $u_0>0$ and small. Then, we can define the displacement map 
\begin{equation}\label{eq:11}
\Delta(u_0)=L(u_0)-R(u_0).
\end{equation}
We will see in the following that
\[
\begin{aligned}
L(u_0)&= L_1 u_0+L_2 u_0^2+L_3 u_0^3+L_4 u_0^4+\cdots,\\
R(u_0)&= R_1 u_0+R_2 u_0^2+R_3 u_0^3+R_4 u_0^4+\cdots.\\
\end{aligned}
\]
It should be clear that the positive zeros of the difference function \eqref{eq:11} correspond with periodic orbits near the periodic orbit at infinity. Thus the periodic orbit at infinity will be stable (unstable) when for $u_0>0$ and small we have $y_1-y_2<0$ $(y_1-y_2>0)$. Furthermore, we will see that when $L_1-R_1=L'(0)-R'(0)$ is non-vanishing, the periodic orbit at infinity will be hyperbolic and its sign determines its stability. More concretely, when $L_1-R_1>0 \, (L_1-R_1<0)$ the infinity of \eqref{eq:2} is stable (unstable). 

\medskip

Alternatively, the orbit passing through the point $(0,y_0)$ with $y_0=1/u_0>0$ can be thought as the orbit that terminates at the point $(0,y_1)$ with $y_1=1/u_1<0$ after a complete turn starting at the point $(0,y_2)$ with $y_2=1/u_2<0$, defining a pseudo-Poincar\'e return map $u_1=\Pi(u_2)=L(R^{-1}(u_2))$. The first derivative is 
\[
\Pi'(u_2)=L'(R^{-1}(u_2))\dfrac{1}{R'(R^{-1}(u_2))},
\]
so that for $u_2=0$ we have $R^{-1}(0)=0$ and then $\Pi'(0)$ reduces to $L_1/R_1$, being this quotient the unity when $\gamma_L+\gamma_R=0$. 

\medskip

Thus, these half-return maps will allow to determine the stability of the periodic orbit at infinity and the birth of other periodic orbits from infinity in a degenerate Hopf type bifurcation. The Taylor series of the displacement map \eqref{eq:11} has all monomials, contrary to what happens in the analytical case in which it is shown that the first non-vanishing coefficient always corresponds to an odd exponent, see \cite{AndLeoGorMai1973}. Moreover, in piecewise differential systems, the return map near a monodromic equilibrium point has a constant term due to the existence of a sliding segment, see \cite{FrePonTor2014}. However, here $\Delta(0)=0$ since the infinity remains invariant under any perturbation.

\medskip

Let us start by considering the left side. Thanks to Proposition~\ref{prop:1}, we already can assume that 
\begin{equation*}
   A_L=
        \left(\begin{array}{cr}
            2\gamma_{L} & -1\\
            1+ \gamma_{L}^2& 0
        \end{array}\right),
 \end{equation*}
and, instead of writing the solution of the differential system starting at the point $(0,y_0)$, we can take advantage of the fact that the exponential matrix $\exp(A_L \tau_L)$ is a fundamental matrix for the corresponding variational system, where $\tau_L$ is the time elapsed between two points of a given orbit. Thus, we have the following relation between the vector field at the arrival point and the vector field at the starting point,
\begin{equation*}
\left(\begin{array}{c}
            -y_1 -b\\
             -\alpha_{L}
        \end{array}\right)
        =\e^{A_L\tau_L} 
        \left(\begin{array}{c}
            -y_0 -b\\
             -\alpha_{L}
        \end{array}\right),
\end{equation*}
or equivalently,
\begin{equation}\label{eq:12}
\left(\begin{array}{c}
            y_1 +b\\
             \alpha_{L}
        \end{array}\right)
        -\e^{A_L\tau_L} 
        \left(\begin{array}{c}
            y_0 +b\\
             \alpha_{L}
        \end{array}\right)=\left(\begin{array}{c}
            0\\
            0
        \end{array}\right).
\end{equation}
         
In order to work near infinity, we introduce new suitable variables that allow us to work as if we were working near an ordinary equilibrium point, without needing to transform the differential equation (as it happens with the Bendixson transformation, see \cite{LP99,GLN15}). The key point is to introduce a suitable change of variables once written the closing equations that determine the periodic orbits of the system; recently, the same idea has been successfully extended to 3D systems in \cite{FPRVA20}. First, as the time $\tau_L$ should be near $\pi$ when $y_0$ is very big, it seems natural to take a new time variable $s_L=\tau_L-\pi$ but, what is more relevant, we also introduce in equation \eqref{eq:12} the new variables
\[
u_0=y_0^{-1},\quad u_1=y_1^{-1}
\]
so that we get, after some standard manipulations, the equation
\[
\left(\begin{array}{c}
            u_0 +bu_0u_1\\
             \alpha_{L}u_0u_1
        \end{array}\right)
        -\e^{A_L(\pi+s_L)} 
        \left(\begin{array}{c}
            u_1 +bu_0u_1\\
             \alpha_{L}u_0u_1
        \end{array}\right)=\left(\begin{array}{c}
            0\\
            0
        \end{array}\right),
\]
where $u_0>0$,  $u_1<0,$ and $s_L$ are assumed to be small enough. Thus, we want to solve the above equation in a neighborhood of the point $(u_0,u_1,s_L)=(0,0,0)$, which turns out to be an equilibrium point. 
    
It is convenient to split the exponential matrix into the product of two matrices, by noting that $\exp(A_L \pi) =-\exp(\gamma_L\pi)I$. After multiplying the last equation by the scalar
\[
\e_L^{-}=\e^{-\gamma_L\pi},
\]
we get
\begin{equation}\label{eq:13}
\e_L^{-}\left(\begin{array}{c}
            u_0 +bu_0u_1\\
             \alpha_{L}u_0u_1
        \end{array}\right)
        + \e^{A_Ls_L} 
        \left(\begin{array}{c}
            u_1 +bu_0u_1\\
             \alpha_{L}u_0u_1
        \end{array}\right)=\left(\begin{array}{c}
            0\\
            0
        \end{array}\right).
\end{equation}
Now, to desingularize equation \eqref{eq:13}, it is enough to remove from the second component the trivial factor $u_1$, and write the equation
    
\begin{equation}\label{eq:14}
\e_L^{-}\left(\begin{array}{c}
            u_0 +bu_0u_1\\
             \alpha_{L}u_0
        \end{array}\right)
        +\left(\begin{array}{cc}
            u_1 & 0\\
             0 & 1
        \end{array}\right)\e^{A_Ls_L} 
        \left(\begin{array}{c}
            1 +bu_0\\
             \alpha_{L}u_0
        \end{array}\right)=\left(\begin{array}{c}
            0\\
            0
        \end{array}\right),
\end{equation}
whose Jacobian with respect to $(u_0,u_1,s_L)$ at $(0,0,0)$ is the full-rank matrix
\[
\left(\begin{array}{ccc}
         \e_L^{-} & 1 & 0\\
       (1+\e_L^{-})\alpha_L & 0 & 1+\gamma_L^2
    \end{array}\right).
\]
It is possible now to apply the Implicit Function Theorem at the point $(u_0,u_1,s_L)=(0,0,0)$, to assure the existence of unique expansions for $u_1=L(u_0)$ and $s_L=\beta(u_0)$ in terms of $u_0$, namely
\[
\begin{aligned}
u_1&= L(u_0)= L_1 u_0+L_2 u_0^2+L_3 u_0^3+L_4 u_0^4+\cdots,\\
s_L&= \beta(u_0)=\beta_1 u_0+\beta_2 u_0^2+\beta_3 u_0^3+\beta_4 u_0^4+\cdots.
\end{aligned}
\]
Computations can be done in a degree by degree manner regarding the powers of $u_0$, taking into account that
\[
\begin{aligned}
 \e^{A_Ls_L} &=I+\beta_1A_Lu_0+\left(\beta_2A_L+\frac{\beta_1^2}{2}A_L^2\right)u_0^2+\left(\beta_3A_L+\beta_1\beta_2A_L^2+\frac{\beta_1^3}{6}A_L^3\right)u_0^3\\ 
 &\phantom{=} +\left(\beta_4A_L+\frac{\beta_2^2+2\beta_1\beta_3}{2}A_L^2+\frac{\beta_1^2\beta_2}{2}A_L^3+\frac{\beta_1^4}{24}A_L^4\right)u_0^4+\cdots,
\end{aligned}
\]
and separating the left hand side terms of \eqref{eq:14} in the form
\[
\e_L^{-}\left[u_0\left(\begin{array}{c}
            1 \\
             \alpha_{L}
        \end{array}\right)+u_0u_1\left(\begin{array}{c}
            b \\
            0
        \end{array}\right)\right]
        +\left(\begin{array}{cc}
            u_1 & 0\\
             0 & 1
        \end{array}\right)\e^{A_Ls_L} 
         \left[\left(\begin{array}{c}
            1 \\
           0
        \end{array}\right)+u_0\left(\begin{array}{c}
            b\\
            \alpha_{L}
        \end{array}\right)\right].
\]
For instance, the vanishing of the first degree terms in \eqref{eq:14} gives
\[
            \e_L^{-}\left(\begin{array}{c}
            1\\
             \alpha_{L}
        \end{array}\right)
        +\left(\begin{array}{c}
           L_1\\
           \alpha_L+(1+\gamma_L^2)\beta_1
        \end{array}\right)=\left(\begin{array}{c}
            0\\
            0
        \end{array}\right),
\]
so that
\begin{equation}\label{eq:15}
L_1=-\e_L^{-},\quad\beta_1=-\frac{1+\e_L^{-}}{1+\gamma_L^2}\alpha_L=-(1+\e_L^{-})x_L.
\end{equation}
Regarding second order terms, we have
\[
            \e_L^{-}\left(\begin{array}{c}
            bL_1\\
            0
        \end{array}\right)
        +\left(\begin{array}{c}
           L_2+2\gamma_L\beta_1L_1+bL_1\\
           (1+\gamma_L^2)(\beta_2+b\beta_1+\gamma_L\beta_1^2)
        \end{array}\right)=\left(\begin{array}{c}
            0\\
            0
        \end{array}\right),
\]
so that
\begin{equation}\label{eq:16}
L_2=\e_L^{-}(1+\e_L^{-})(b-2\gamma_Lx_L)=-\e_L^{-}(1+\e_L^{-})y_L,\quad\beta_2=-b\beta_1-\gamma_L\beta_1^2,
\end{equation}
and so on. We have also obtained $L_3$, $\beta_3$, $L_4,$ and  $\beta_4$. Here, we write the final expressions for $L_3$ and $L_4$, which will be needed later for the analysis, namely
\begin{equation}\label{eq:17}
\begin{aligned}
L_3&=-\e_L^{-} (1 + \e_L^{-}) \left((1 + \gamma_L^2) \frac{\e_L^{-}-1}{2} x_L^2 +(1 +  \e_L^{-}) y_L^2\right),\\
L_4&=-\e_L^{-} (1 + \e_L^{-}) Q_L,
\end{aligned}
\end{equation}
where
\[
  Q_L=
 (1 + \gamma_L^2)\left(\frac{2\gamma_L(1 - 
      \e_L^{-} + (\e_L^{-})^2)}{3} x_L^3 + 
   \frac{(\e_L^{-}-1)(2 \e_L^{-}+ 3 )}{2}  x_L^2 y_L\right) + 
    (1 + \e_L^{-})^2 y_L^3.
\]

\medskip

The procedure can be repeated step by step for the right half-plane, starting from equation 
\[
\left(\begin{array}{c}  -y_0 +b\\ -\alpha_{R} \end{array}\right)
        =\e^{A_R\tau_R}  \left(\begin{array}{c} -y_1 +b\\ -\alpha_{R} \end{array}\right),
\]
just, by the symmetry of our model, changing $(y_0,y_1,b,L)$ by $(y_1,y_0,-b,R),$ respectively. Because now the point $(0,y_1)$ is the initial point and $(0,y_0)$ is the final point, the parameter $b$ has now a plus sign, and all the subscripts are $R$ instead of $L$. We introduce the equivalent values $s_R=\tau_R-\pi$ and 
\[
\e_R^{+}=\e^{\gamma_R\pi}.
\]
Notice that we want to obtain, for the orbit in the right half-plane that  arrives at $(0,1/u_0)$ starting from the point $(0,1/u_1)$ with $u_0>0$ and $u_1<0$, being both small enough, the expansion 
\[
u_1 =R(u_0)=R_1 u_0+R_2 u_0^2 +R_3 u_0^3 + R_4 u_0^4+\cdots.
\]
We obtain
\begin{equation}\label{eq:18}
\begin{aligned}R_1&=-\e_R^{+},\\
R_2&=-\e_R^{+} (1 + \e_R^{+}) (2 \gamma_R x_R+b)=-\e_R^{+} (1 + \e_R^{+}) y_R,\\
R_3&=-\e_R^{+} (1 + \e_R^{+}) \left( (1 + \gamma_R^2) (\e_R^{+}-1) x_R^2 /2+(1 +  \e_R^{+}) y_R^2\right),\\
R_4&=-\e_R^{+} (1 + \e_R^{+}) Q_R
\end{aligned}
\end{equation}
where 
\[
Q_R= (1 + \gamma_R^2)\left(\frac{2\gamma_R(1 -  \e_R^{+} + (\e_R^{+})^2)}{3} x_R^3 + \frac{(\e_R^{+}-1) (2 \e_R^{+}+ 3 )}{2} x_R^2 y_R\right) +   (1 + \e_R^{+})^2 y_R^3.
\]
These coefficients could be directly derived from $L_i,$ for $i=1,\ldots,4$ by using the transformation \eqref{eq:10} restricted to the parameters space, namely
\[(\gamma_L,x_L,y_L,b,\gamma_R,x_R,y_R)\mapsto (-\gamma_R,-x_R,y_R,-b,-\gamma_L,-x_L,y_L).\]

From \eqref{eq:15}, \eqref{eq:16}, \eqref{eq:17}, and \eqref{eq:18} we can write the first terms in the Taylor series of the displacement map \eqref{eq:11}. We will see in the next sections that we only need these four coefficients to characterize the centers and the maximum weak-focus order at infinity.
 
\section{The centers characterization}\label{se:4}
This section is devoted to prove our main result Theorem~\ref{thm:3} that characterizes when \eqref{eq:2} has a center at infinity. In fact, it is a direct consequence of the next result where we also detail where are located the (finite) equilibrium points and how are the possible phase portraits.

\begin{theorem} \label{thm:5}
	Consider system \eqref{eq:2} or equivalently \eqref{eq:7}. There exists a period annulus at infinity if and only if we are in one of the three following cases.
\begin{enumerate}[(a)]
		\item The conditions $\gamma_L=\gamma_R=0$ and $b=0$ hold. Then, we also have $y_L=y_R=0$ and the phase plane is the result of matching two linear centers, both symmetric with respect to the $x$-axis, located at the points $(x_L,0)$ and $(x_R,0)$, which can be real or virtual equilibria. Moreover, the system is reversible and if at least one of such equilibrium points is virtual then the center is global.
		\item The conditions $\gamma_L=-\gamma_R\ne0$, $x_L=x_R=0,$ and $b=0$ hold. Then, we also have $y_L=y_R=0$, and the origin is a boundary focus from both sides, constituting a reversible global nonlinear center.
		\item The conditions $\gamma_L=-\gamma_R\ne0$, $x_L=-x_R\ne0,$ and $b=0$ hold. Then, we also have $y_L=y_R\ne0$, so that we have two real equilibria when $x_L<0<x_R$ and two virtual ones when $x_R<0<x_L$. The phase plane exhibits a reversible nonlinear center at infinity. Such a center is not global when there are real equilibria, ending in a heart-shaped homoclinic orbit to a pseudo-saddle at the origin, which contains the two foci in its interior. For the case of virtual equilibria, the origin behaves as a global nonlinear center.
	\end{enumerate}   
\end{theorem}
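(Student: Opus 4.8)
The plan is to prove both implications through the displacement map $\Delta(u_0)=L(u_0)-R(u_0)$ whose low-order coefficients were computed in Section~\ref{se:3}, and then to treat the finer phase-portrait claims separately by geometric arguments. For the necessity direction, a period annulus at infinity forces $\Delta(u_0)\equiv 0$ for small $u_0>0$, hence $\Delta_i=L_i-R_i=0$ for every $i$; I would only need the first four. From $\Delta_1=0$ together with \eqref{eq:15} and \eqref{eq:18} one gets $\e_L^-=\e_R^+$, that is $\gamma_L=-\gamma_R$; I then set $\gamma:=\gamma_L$ and $\e:=\e_L^-=\e_R^+$. Feeding this into $\Delta_2=0$ via \eqref{eq:16} and \eqref{eq:18} yields $y_L=y_R$. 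Since now $\gamma_L^2=\gamma_R^2$, the $y$-terms in $L_3$ and $R_3$ cancel and $\Delta_3=0$ reduces, up to a nonzero factor, to $(\e-1)(x_L^2-x_R^2)=0$; thus either $\gamma=0$ or $x_L^2=x_R^2$.

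I would then split along these alternatives. If $\gamma=0$, then $y_L=-b$ and $y_R=b$, so $y_L=y_R$ forces $b=0$ and $y_L=y_R=0$, giving case (a). If $\gamma\ne0$, then $x_L=\pm x_R$, and I impose $\Delta_4=0$ using \eqref{eq:17} and \eqref{eq:18}: the choice $x_L=x_R$ forces, since $1-\e+\e^2>0$, that $x_L=x_R=0$ and hence $b=0$, which is case (b); the choice $x_L=-x_R\ne0$ makes $Q_L=Q_R$ automatically, so that $\Delta_4$ vanishes for free, while $y_L=y_R$ gives $b=\gamma(x_L+x_R)=0$ and $y_L=y_R\ne0$, which is case (c). Note that necessity needs no information about higher coefficients.

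For the sufficiency direction I would avoid further series work and argue by reversibility. Each parameter set leaves system \eqref{eq:9} invariant under one of the reversing involutions in \eqref{eq:10}: in case (a) the conditions $\gamma_L=\gamma_R=0$, $b=0$ are precisely those making the second map in \eqref{eq:10} act as the identity on parameters, so the flow is time-reversible with respect to $y=0$; in cases (b) and (c) the conditions $\gamma_L=-\gamma_R$, $x_L=-x_R$, $b=0$ do the same for the first map in \eqref{eq:10}, so the flow is time-reversible with respect to $x=0$. Because infinity is monodromic, each reversal sends a forward half-orbit to a backward half-orbit, forcing the orbits near infinity to close symmetrically; hence infinity is a center and $\Delta\equiv 0$. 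Combined with necessity, this also confirms a posteriori that vanishing of $\Delta_1,\dots,\Delta_4$ already implies a center, consistent with the weak-focus order bound of Theorem~\ref{thm:4}.

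It remains to establish the detailed geometry, and this is where I expect the real work. From \eqref{eq:5} and \eqref{eq:6} one locates the foci and reads off from the signs of $x_L,x_R$ whether each is real (lying in its own half-plane) or virtual; the reversal symmetry then organizes the trajectories into arcs of the two linear flows glued along $\Sigma$. In case (a) both halves are genuine linear centers and the annulus is global as soon as one equilibrium is virtual, since no closed orbit can then be trapped around a finite focus. In case (c) with real equilibria $x_L<0<x_R$ I would analyze the origin as a Filippov pseudo-saddle (with $b=0$ the sliding set collapses to a point, and the tangency data $a_L,a_R$ fix its character) and show that the outermost symmetric orbits accumulate on a heart-shaped homoclinic loop enclosing both foci, whereas for virtual equilibria $x_R<0<x_L$ the annulus fills the plane. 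The main obstacle is exactly this global bookkeeping: proving that the symmetric arcs close up at every amplitude, identifying the homoclinic connection and the pseudo-saddle through the Filippov convention, and distinguishing global centers from bounded annuli. The purely algebraic center characterization, by contrast, follows directly from the coefficient computation together with the reversibility argument.
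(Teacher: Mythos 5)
Your proposal is correct and takes essentially the same route as the paper: necessity by forcing $\Delta_1=\Delta_2=\Delta_3=\Delta_4=0$ with exactly the same case splitting ($\gamma_L=0$ giving (a); $x_L=x_R$ plus $\Delta_4=0$ giving (b); $x_L=-x_R\ne0$ giving (c)), and sufficiency via time-reversibility of the monodromic orbits near infinity. The only cosmetic difference is that you obtain the reversibilities uniformly from the parameter involutions \eqref{eq:10}, while the paper checks them case by case (adding explicit first integrals in case (a) and citing a known global-center result in case (b)); the remaining phase-portrait claims are handled at the same descriptive level in both.
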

\begin{figure}[h]
	\includegraphics{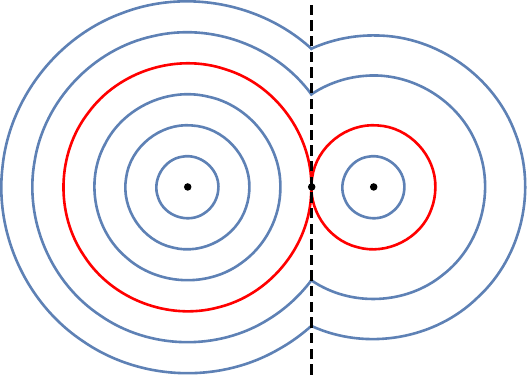} \quad
	\includegraphics{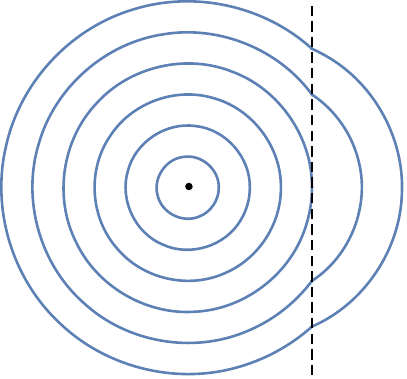} \quad
	\includegraphics{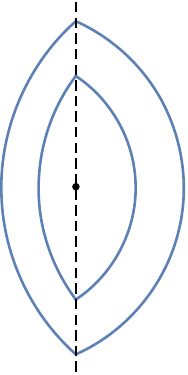}
	\caption{The centers corresponding to Theorem~\ref{thm:5}.(a): The non-global (left) and the two global ones (middle and right).}
	\label{fi:1}
\end{figure}
\begin{figure}[h]
	\includegraphics{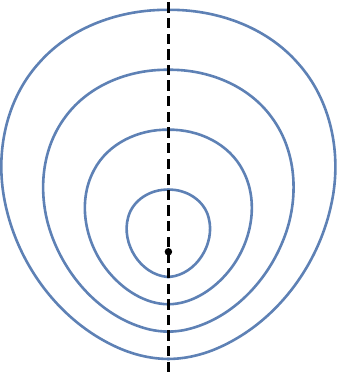} \quad
	\includegraphics{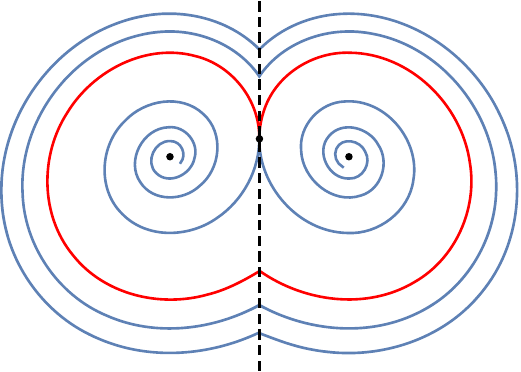} \quad
	\includegraphics{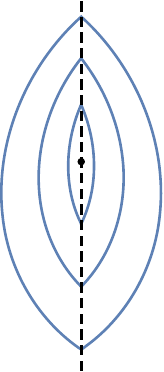}
	\caption{In the left, the global center corresponding to Theorem~\ref{thm:5}.(b); in the middle and right the centers corresponding to Theorem~\ref{thm:5}.(c), the non-global and the global one, respectively.}
	\label{fi:2}
\end{figure}  

In Figures~\ref{fi:1} and \ref{fi:2} we have drawn the phase portraits of the different centers of Theorem~\ref{thm:5}. The boundary of the period annuli when the centers are non-global are depicted in red. Clearly, the global centers have only one period annulus, while the non-global centers have either three period annuli or only one. 

\medskip

A direct application of the above result is the center classification when system \eqref{eq:2} is continuous, see Corollary~\ref{coro:2}. In this special case, only centers of type (a) or (b) appear. Clearly, centers of type (c)  are always discontinuous, since from \eqref{eq:8} we have $\alpha_L=-\alpha_R\ne 0$. More concretely, for continuous centers of type (a) the system is indeed purely linear, and so both equilibria are located at the same point $(\alpha,0)$, which becomes a global linear center, being $\alpha$ the common value for the non-homogeneous terms. Centers of type (b) are in fact always continuous yet nonlinear.

\medskip

Before proving the centers' characterization theorem, we show a simple technical result relating the first derivative at zero of the displacement map $\Delta$ in \eqref{eq:11} with the sum of the divergence of left and right systems in \eqref{eq:2}. In fact, it will characterize, when the first difference $L_1-R_1$ is non-vanishing, the stability of the periodic orbit at infinity.

\begin{lemma}\label{lem:6} Regarding \eqref{eq:15} and \eqref{eq:18}, the equality 
	\[
	\operatorname{sign} (L_1-R_1)=\operatorname{sign} (\gamma_L+\gamma_R)
	\]
	is true.
\end{lemma}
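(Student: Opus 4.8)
The statement to prove is
\[
\operatorname{sign}(L_1-R_1)=\operatorname{sign}(\gamma_L+\gamma_R),
\]
and from the explicit formulas obtained earlier we already have $L_1=-\e_L^{-}=-\e^{-\gamma_L\pi}$ and $R_1=-\e_R^{+}=-\e^{\gamma_R\pi}$. So the plan is essentially to substitute these two closed forms and reduce the claim to an elementary comparison of exponentials. First I would write
\[
L_1-R_1 = -\e^{-\gamma_L\pi}+\e^{\gamma_R\pi}=\e^{\gamma_R\pi}-\e^{-\gamma_L\pi}.
\]
The sign of this difference is governed entirely by comparing the two exponents, because $t\mapsto \e^{t}$ is strictly increasing on $\R$.

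Concretely, since the exponential is strictly increasing, $\e^{\gamma_R\pi}-\e^{-\gamma_L\pi}>0$ if and only if $\gamma_R\pi>-\gamma_L\pi$, i.e. if and only if $\gamma_R>-\gamma_L$, which is exactly $\gamma_L+\gamma_R>0$. The same equivalence holds verbatim with the inequalities reversed for the negative case, and both exponents coincide precisely when $\gamma_L+\gamma_R=0$, giving $L_1-R_1=0$. Hence
\[
\operatorname{sign}(L_1-R_1)=\operatorname{sign}(\gamma_R-(-\gamma_L))=\operatorname{sign}(\gamma_L+\gamma_R),
\]
as claimed. To make the monotonicity step completely transparent one can factor out a positive quantity, for instance writing
\[
L_1-R_1=\e^{-\gamma_L\pi}\bigl(\e^{(\gamma_L+\gamma_R)\pi}-1\bigr),
\]
so that, since $\e^{-\gamma_L\pi}>0$, the sign of $L_1-R_1$ equals the sign of $\e^{(\gamma_L+\gamma_R)\pi}-1$, and the latter is positive, zero, or negative according to whether the exponent $(\gamma_L+\gamma_R)\pi$ is positive, zero, or negative. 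Since $\pi>0$, this is governed exactly by the sign of $\gamma_L+\gamma_R$.

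There is no genuine obstacle here: the lemma is a direct corollary of the first-order coefficients $L_1$ and $R_1$ already computed in \eqref{eq:15} and \eqref{eq:18}, together with the strict monotonicity (equivalently, strict positivity and the elementary sign behaviour of $\e^{t}-1$) of the real exponential function. The only thing to be careful about is keeping track of the signs of the two exponents $-\gamma_L\pi$ and $\gamma_R\pi$ so that they combine into $\gamma_L+\gamma_R$ rather than $\gamma_L-\gamma_R$; the factorization above makes this bookkeeping automatic. I would therefore present the one-line factorization as the whole proof.
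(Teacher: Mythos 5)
Your proof is correct and follows essentially the same route as the paper: substitute $L_1=-\e^{-\gamma_L\pi}$, $R_1=-\e^{\gamma_R\pi}$, and factor $L_1-R_1=\e^{-\gamma_L\pi}\bigl(\e^{(\gamma_L+\gamma_R)\pi}-1\bigr)$, so the sign is that of $\gamma_L+\gamma_R$. This factorization is exactly the one-line argument the paper gives.
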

\begin{proof}
	We have 
	\[
	L_1-R_1=\e_R^{+}-\e_L^{-}=\e^{\pi\gamma_R}-\e^{-\pi\gamma_L}=\e^{-\pi\gamma_L}\left(\e^{\pi(\gamma_R+\gamma_L)}-1\right),
	\]
	and the conclusion follows easily.
\end{proof}

\begin{proof}[Proof of Theorem~\ref{thm:5}]
The first step shows that the conditions given in statements (a), (b), and (c) are sufficient for the existence of a period annulus near infinity. In the second step will see that they are also necessary. The main tool will be the study of the displacement map $\Delta(u_0)=L(u_0)-R(u_0),$ presented in Section~\ref{se:3}.

\medskip

We start the first step by assuming that we are under the conditions of statement (a). To see that these conditions assure the existence of a period annulus at infinity it suffices to consider that the system reduces to the  discontinuous zero-divergence piecewise linear system (of \emph{sewing} and \emph{refracting} type, see \cite{BuzMedTei2013,MedTor2015,PonRosVel2018})
\begin{equation*}
\begin{array}{ll}
    \left\{\begin{array}{l}
        \dot{x}=- y, \\ 
        \dot{y}=x -x_L,
    \end{array}\right. \ \text{ for } x\leq0; \quad
    &
    \left\{\begin{array}{l}
        \dot{x}=-y, \\ 
        \dot{y}=x -x_R, 
    \end{array}\right.   \text{ for } x>0. \quad
\end{array}
\end{equation*}
This piecewise system admits the time-reversibility $(x,y,\tau)\mapsto(x,-y,-\tau).$ Then, the functions $L$ and $R$ in \eqref{eq:11} satisfy $L(u_0)=-u_0$ and $R(u_0)=-u_0.$ Hence, the displacement function $\Delta$ vanishes identically for all $u_0>0$. We have so a center near infinity, resulting from the matching of two (real or virtual) linear centers. See the different phase portraits in Figure~\ref{fi:1}. 

Additionally, we have also the first integrals  $H_L(x,y)=(x-x_L)^2+y^2$ for $x<0$ and $H_R(x,y)=(x-x_R)^2+y^2$ for  $x\geq0$. Depending on the values of $x_L$ and $x_R$ we can have none, one, or two real equilibrium points surrounded by closed periodic orbits, to be either circles contained in one half-plane or the concatenation of two arcs of the form
\[
\begin{cases}
(x-x_L)^2+y^2=h_L, & \text{for } x\leq 0,\\
(x-x_R)^2+y^2=h_R, &  \text{for } x> 0,\\
\end{cases}
\]
intersecting at the two points $(0,\pm y_{h})$, with $y_{h}\geq0$, such that
\[
y_{h}^2=h_L-x_L^2=h_R-x_R^2,
\]
where the values $h_L\ge x_L^2$ and $h_R\ge x_R^2$ must satisfy the last equality.

\medskip

Considering now the conditions given in statement (b), the system becomes the continuous piecewise linear system
\[
\begin{array}{ll}
    \left\{\begin{array}{l}
        \dot{x}=2\gamma_Lx- y, \\ 
        \dot{y}=x,
    \end{array}\right. \text{ for } x\leq0; \quad
    &
    \left\{\begin{array}{l}
        \dot{x}=-2\gamma_Lx-y, \\ 
        \dot{y}=x,
    \end{array}\right.   
\end{array}  \text{ for } x\geq 0,
\]
which is well known to have a global nonlinear center at the origin (\cite{FPT12}), so that $\Delta$ vanishes identically. Note that the system admits the time-reversibility $(x,y,\tau)\mapsto(-x,y,-\tau)$, see Figure~\ref{fi:2} left.

\medskip

Regarding statement (c), the system becomes
\[
    \left\{\begin{array}{l}
        \dot{x}=2\gamma_Lx-y, \\ 
        \dot{y}=(1+\gamma_L^2)(x-x_L),
    \end{array}\right. \text{for } x\leq0; \quad
    \left\{\begin{array}{l}
        \dot{x}=-2\gamma_Lx-y, \\ 
        \dot{y}=(1+\gamma_L^2)(x+x_L),
    \end{array}\right.   
\text{for } x\geq0,
\]
which, as in the previous case, admits the reversibility $(x,y,\tau)\mapsto(-x,y,-\tau)$. Clearly, this reversibility allows us to show that any arc of orbit in one half-plane with the two endpoints on the $y$-axis determines a closed orbit, so that the existence of a period annulus at infinity is guaranteed. Excluding the cases $x_L=0$ or $\gamma_L=0$ that lead to previously studied cases, for the case with $x_L<0$ and $\gamma_L>0$ such a period annulus at infinity terminates at a bounded heart-shaped closed orbit, which behaves like a homoclinic orbit to a pseudo-saddle at the origin, formed by the collision of two visible tangencies and containing two foci of opposite stability in its interior, see Figure~\ref{fi:2} right. If $x_L<0$ and $\gamma_L<0,$ then the situation is analogous, but this time the period annulus at infinity terminates at an inverted heart-shaped closed orbit, containing the two foci. When $x_L>0$ we have just a pseudo-center at the origin, where there are two invisible tangencies. 

\medskip

As we have mentioned, the second step follows by considering the displacement function $\Delta(u_0)=L(u_0)-R(u_0)$ for $u_0>0$ defined in \eqref{eq:11}, we will have a period annulus near the periodic orbit at infinity if there exists $\varepsilon>0$ such that $\Delta(u_0)=0$ for all $0<u_0<\varepsilon$. This implies, since $\Delta$ is an analytic function at $u_0=0$, that all its derivatives should vanish at 0. First, from Lemma~\ref{lem:6} we know that
\begin{equation}\label{eq:19}
\Delta_1=L_1-R_1=0 \text{ if and only if } \gamma_L+\gamma_R=0.
\end{equation}
Assuming such a condition, that is, $\gamma_R=-\gamma_L$ and therefore $ \e_R^{+}=\e_L^{-}$, from \eqref{eq:16} and \eqref{eq:18} we have
\[
\Delta_2=L_2-R_2=\e_L^{-}(1 + \e_L^{-})(y_R-y_L),
\]
and so,
\begin{equation}\label{eq:20}
\Delta_2=L_2-R_2=0  \text{ if and only if } y_R-y_L =0.
\end{equation}

We study now the condition $\Delta_3=0,$ when $\Delta_1=\Delta_2=0.$ If we assume that \eqref{eq:19} and \eqref{eq:20} hold, then we see from \eqref{eq:17}  and \eqref{eq:18} that 
\begin{equation}\label{eq:21}
\Delta_3=L_3-R_3=\dfrac{\e_L^{-}}{2}(1-(\e_L^{-})^2)(1+\gamma_L^2)(x_L^2-x_R^2).
\end{equation}
Three possibilities arise for \eqref{eq:21} to vanish. First, we must study the case $\e_L^{-}=1$, which leads to $\gamma_L=0$ and then, from \eqref{eq:19} and  \eqref{eq:20}, we are under the conditions of statement (a). 

A second possibility for \eqref{eq:21} to vanish is the case $x_R=x_L$. Assuming again \eqref{eq:19} and \eqref{eq:20}, we get
\begin{equation}\label{eq:22}
\Delta_4=L_4-R_4=-\dfrac{4\e_L^{-}}{3}\left(1+(\e_L^{-})^3\right)(1 +  \gamma_L^2) \gamma_L x_L^3.
\end{equation}
In this case, from \eqref{eq:5} and \eqref{eq:6}, additionally we have the condition $y_L=y_R=0.$ We conclude that \eqref{eq:22} vanishes only either if $\gamma_L=0$, and then we are in the case of statement~(a), or if $x_L=0$, being then under the conditions of statement~(b), where we have again as a consequence $b=0$. 

Finally, the third possibility for \eqref{eq:21} to vanish, once assumed conditions \eqref{eq:19} and \eqref{eq:20}, is the case $x_R=-x_L$, which again implies $b=0$ and also $y_L=y_R$ (not necessarily zero, this time). In short, we are in  statement~(c). 
\end{proof}

\section{Weak-foci and its perturbations}\label{se:5}

In this section, we will prove Theorem~\ref{thm:4}. Firstly, we deal with the part concerning the hyperbolicity and stability of the periodic orbit at infinity and the possible weak-focus orders that it can have, see Theorem~\ref{thm:7}. Secondly, Theorem~\ref{thm:8} provides a complete description about when system \eqref{eq:2} exhibits $1,2,$ or $3$ limit cycles bifurcating from the different possible weak-focus orders.

\begin{theorem}\label{thm:7}
	For system \eqref{eq:2}, or equivalently for system \eqref{eq:7}, the periodic orbit at infinity is hyperbolic and stable (unstable) whenever $\gamma_L+\gamma_R>0$ $(\gamma_L+\gamma_R<0)$. In the case $\gamma_L+\gamma_R=0$ the periodic orbit at infinity is non-hyperbolic so that it behaves like a weak-focus or a center. In such a case, the following statements hold.
	\begin{enumerate}[(a)]
		\item If $\gamma_L=-\gamma_R$ and $y_L-y_R\ne0$ (equivalently, $b-\gamma_L(x_L+x_R)\ne0$), then the periodic orbit at infinity behaves like a weak-focus of order $1.$ It is stable when $y_R-y_L>0$  (equivalently, $b>\gamma_L(x_L+x_R)$) and unstable otherwise. 
		\item If $\gamma_L=-\gamma_R\ne0$ and the two conditions $y_L=y_R$ and $x_L^2-x_R^2\ne0$ hold, then the periodic orbit at infinity behaves like a weak-focus of order $2$ and it is stable (unstable) when $\gamma_L(x_L^2-x_R^2)>0$ $(\gamma_L(x_L^2-x_R^2)<0)$. 
		\item If $\gamma_L=-\gamma_R\ne0$ and the two conditions $y_L=y_R=0$ and $x_L=x_R\ne0$ hold, then the periodic orbit at infinity behaves like a weak-focus of order $3$ and it is stable (unstable) when $\gamma_Lx_L<0$ $(\gamma_Lx_L>0)$. 
		\item Otherwise, that is, when the three conditions $\gamma_L=-\gamma_R$, $y_L=y_R$ and $x_L=-x_R$ hold, so that $b=0$ also holds, we are in one of the three center cases of Theorem~\ref{thm:5}. Thus, the periodic orbit at infinity is stable but not isolated, and so it is not orbitally asymptotically stable. 
	\end{enumerate}
\end{theorem}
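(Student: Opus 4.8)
The plan is to read off everything from the displacement map $\Delta(u_0)=L(u_0)-R(u_0)$, whose coefficients $\Delta_i=L_i-R_i$ were already assembled in equations \eqref{eq:19}--\eqref{eq:22}. The guiding principle, established in Section~\ref{se:3}, is that the periodic orbit at infinity is stable precisely when $\Delta(u_0)>0$ for small $u_0>0$; since $\Delta$ is analytic with $\Delta(0)=0$, this sign is governed by the first non-vanishing coefficient $\Delta_i$, and the weak-focus order equals $i-1$. So the whole proof reduces to locating the first nonzero $\Delta_i$ and reading its sign against the hypotheses.

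First I would dispatch the hyperbolic case. By Lemma~\ref{lem:6} we have $\operatorname{sign}(\Delta_1)=\operatorname{sign}(\gamma_L+\gamma_R)$, so $\gamma_L+\gamma_R\ne0$ forces $\Delta_1\ne0$ and the orbit is hyperbolic; the sign convention above then gives stability for $\gamma_L+\gamma_R>0$ and instability for $\gamma_L+\gamma_R<0$.

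Next, assuming $\gamma_L+\gamma_R=0$ (so $\Delta_1=0$ and $\e_R^{+}=\e_L^{-}$), I would walk down the cascade \eqref{eq:20}--\eqref{eq:22}. If $y_L\ne y_R$ then $\Delta_2\ne0$, and since $\e_L^{-}(1+\e_L^{-})>0$ its sign is that of $y_R-y_L$: an order-$1$ weak-focus, stable iff $y_R>y_L$, which is case~(a). If $y_L=y_R$ but $\gamma_L(x_L^2-x_R^2)\ne0$, then $\Delta_2=0$ and $\Delta_3\ne0$; using $\operatorname{sign}(1-(\e_L^{-})^2)=\operatorname{sign}(\gamma_L)$, the sign of $\Delta_3$ equals that of $\gamma_L(x_L^2-x_R^2)$, giving an order-$2$ weak-focus, which is case~(b). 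If moreover $x_L=x_R\ne0$ (which forces $y_L=y_R=0$), then $\Delta_2=\Delta_3=0$ and $\Delta_4\ne0$; since the prefactor $-\tfrac{4}{3}\e_L^{-}(1+(\e_L^{-})^3)(1+\gamma_L^2)$ is negative, $\operatorname{sign}(\Delta_4)=-\operatorname{sign}(\gamma_L x_L)$, so we obtain an order-$3$ weak-focus, stable iff $\gamma_L x_L<0$, which is case~(c).

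Finally, for case~(d) I would invoke the proof of Theorem~\ref{thm:5}: the simultaneous vanishing $\Delta_1=\Delta_2=\Delta_3=\Delta_4=0$ was shown there to force one of the three reversible center configurations, so the orbit at infinity lies inside a period annulus and is Lyapunov stable but not orbitally asymptotically stable. This same fact caps the weak-focus order at three, since $\Delta_4\ne0$ is the last possibility before the vanishing conditions become exactly the center conditions. The only real obstacle is the sign bookkeeping---tracking $\operatorname{sign}(1-(\e_L^{-})^2)$ and the $\Delta_4$ prefactor against the hypotheses $\gamma_L\ne0$ and $x_L\ne0$---together with verifying that cases~(a)--(d) are genuinely exhaustive under $\gamma_L+\gamma_R=0$; the latter follows from the three-way split of \eqref{eq:21} already exploited in Theorem~\ref{thm:5}.
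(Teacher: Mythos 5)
Your proposal is correct and takes essentially the same route as the paper's own proof: the stability criterion ``stable iff $\Delta(u_0)>0$ for small $u_0>0$'' set up in Section~\ref{se:3}, Lemma~\ref{lem:6} for the hyperbolic case, the coefficient expressions \eqref{eq:20}--\eqref{eq:22} for statements (a)--(c), and Theorem~\ref{thm:5} for statement (d) and the exhaustiveness of the case split. The only difference is cosmetic: you make explicit the sign bookkeeping (e.g.\ $\operatorname{sign}\bigl(1-(\e_L^{-})^2\bigr)=\operatorname{sign}(\gamma_L)$ and the negativity of the $\Delta_4$ prefactor) that the paper leaves to the reader.
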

\begin{proof}
Following the notation used at the beginning of Section~\ref{se:3}, the periodic orbit at infinity will be stable (unstable) when, for $u_0>0$ and small, we have $y_1-y_2<0$ $(y_1-y_2>0)$. After multiplying by $u_1u_2>0$, we get that the periodic orbit at infinity will be stable (unstable) when, for $u_0>0$ and small, we have $u_2-u_1<0$ $(u_2-u_1>0)$, or equivalently $u_1-u_2=\Delta(u_0)>0$ $(\Delta(u_0)<0)$. Moreover, under this non-vanishing condition the derivative of the pseudo-return map is not the unity and the periodic orbit at infinity is hyperbolic. We notice that the computation of the derivative of the pseudo-return map has been done also in Section~\ref{se:3}. Hence, the first statement about stability when $\gamma_L+\gamma_R\ne0$ follows directly from Lemma~\ref{lem:6}. 

\medskip
When the quotient is the unity value, i.e. $\gamma_L+\gamma_R=0$, we are in the non-hyperbolic case. Then, the assertions on the stability require to consider higher-order derivatives of the displacement function $\Delta$ at $u_0=0$, which allow to determine the sign of $\Delta(u_0)$ for small $u_0>0$. Statements (a), (b), and (c) come from the expressions \eqref{eq:20}, \eqref{eq:21}, and \eqref{eq:22}, respectively.  Statement (d) is a direct consequence of Theorem~\ref{thm:5}.
\end{proof}

Note that from Corollary~\ref{coro:2} it is easy to check that statements (b) and (c) in Theorem~\ref{thm:7} actually correspond to discontinuous systems \eqref{eq:2}. Effectively, we have then $\gamma_L=-\gamma_R$ so that the necessary condition for continuity $\alpha_L=\alpha_R$ fails in (b) since $x_L\ne x_R$. Although $\alpha_L=\alpha_R$ in case (c), this time the condition $b=0$ is not fulfilled, since then $b=2\gamma_L x_L$. Hence, for continuous systems \eqref{eq:2} the periodic orbit at infinity can only behave like a weak-focus of order $1$.

\medskip

Next result proves the second statement of Theorem~\ref{thm:4}. 

\begin{theorem} \label{thm:8}
	System \eqref{eq:2}, or equivalently system \eqref{eq:7}, undergoes a degenerated Hopf bifurcation at infinity for $\gamma_L+\gamma_R=0$, and the following statements hold.
	\begin{enumerate}[(a)]
		\item If we take $\gamma_R$ as the only bifurcation parameter, assuming fixed values for the remaining parameters, and the condition $y_L-y_R\ne0$ (equivalently, $b-\gamma_L(x_L+x_R)\ne0)$ holds, then one hyperbolic stable (unstable) limit cycle bifurcates from infinity for $\gamma_L<-\gamma_R$  $(\gamma_L>-\gamma_R)$ provided that $y_L-y_R<0$ $(y_L-y_R>0)$.
		\item If we take $(\gamma_R,b)$ as bifurcation parameters, assuming fixed values for the remaining parameters, and the condition $x_L^2-x_R^2\ne0$ holds, then the critical point $(\gamma_R,b)=(-\gamma_L,\gamma_L(x_L+x_R))$ is a bifurcation point of co-dimension two. Consequently, up to $2$ limit cycles can bifurcate from infinity.
		\item If we assume fixed values for $\gamma_L\ne0$ and $x_L\ne0$, then within the three-parameter space $(\gamma_R,b,x_R)$  the critical point $(\gamma_R,b,x_R)=(-\gamma_L,2\gamma_Lx_L,x_L)$ is a bifurcation point of co-dimension three. In particular, up to three limit cycles can bifurcate from infinity, so that in a neighborhood of such a critical point there are parameter values for which the system exhibits $3$ limit cycles of big amplitude. 
	\end{enumerate}
\end{theorem}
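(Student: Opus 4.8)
The strategy throughout is to read limit cycles off the simple positive zeros of the displacement map $\Delta(u_0)=\sum_{i\ge1}\Delta_i u_0^i$ and to control those zeros by showing that the chosen bifurcation parameters move the low-order coefficients $\Delta_i$ independently. Concretely, in each of the three cases I would verify that the map sending the bifurcation parameters to $(\Delta_1,\dots,\Delta_k)$ (with $k=1,2,3$) is a submersion at the critical point, that is, has full-rank Jacobian, while the next coefficient $\Delta_{k+1}$ remains fixed and nonzero there by virtue of the weak-focus order already computed in Theorem~\ref{thm:7}. Once this independence is in hand, a standard ladder argument — prescribe $\Delta_k,\Delta_{k-1},\dots,\Delta_1$ successively, of rapidly decreasing magnitude and with signs alternating against $\Delta_{k+1}$ — produces exactly $k$ simple positive zeros of $\Delta$, hence $k$ hyperbolic limit cycles of big amplitude, so the bifurcation point has codimension $k$.

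For part~(a) this is the classical Hopf picture. By Lemma~\ref{lem:6}, $\Delta_1=\e_R^{+}-\e_L^{-}=\e^{\pi\gamma_R}-\e^{-\pi\gamma_L}$ is a strictly increasing function of $\gamma_R$ vanishing at $\gamma_R=-\gamma_L$, while on that line $\Delta_2=\e_L^{-}(1+\e_L^{-})(y_R-y_L)$ keeps a fixed nonzero sign since $y_L\ne y_R$. Crossing $\gamma_R=-\gamma_L$ reverses the sign of $\Delta_1$, so on the side where $\Delta_1$ and $\Delta_2$ have opposite signs there is one simple positive zero, i.e. one hyperbolic limit cycle, with stability dictated by the sign of $\Delta_2$ exactly as in Theorem~\ref{thm:7}(a). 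For part~(b), at $(\gamma_R,b)=(-\gamma_L,\gamma_L(x_L+x_R))$ one has $\Delta_1=\Delta_2=0$ and $\Delta_3\ne0$ (order-two weak focus, Theorem~\ref{thm:7}(b), using $x_L^2\ne x_R^2$). I would compute the $2\times2$ Jacobian of $(\Delta_1,\Delta_2)$ in $(\gamma_R,b)$ from the unreduced expressions $\Delta_1=\e_R^{+}-\e_L^{-}$ and $\Delta_2=-\e_L^{-}(1+\e_L^{-})y_L+\e_R^{+}(1+\e_R^{+})y_R$ with $y_L=2\gamma_Lx_L-b$, $y_R=2\gamma_Rx_R+b$. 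Since $\Delta_1$ is independent of $b$ with $\partial\Delta_1/\partial\gamma_R=\pi\e_L^{-}\ne0$, and $\partial\Delta_2/\partial b=\e_L^{-}(1+\e_L^{-})+\e_R^{+}(1+\e_R^{+})=2\e_L^{-}(1+\e_L^{-})\ne0$ at the critical point, the Jacobian is triangular with nonzero determinant, giving a codimension-two bifurcation and up to two limit cycles.

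Part~(c) is the crux. At $(\gamma_R,b,x_R)=(-\gamma_L,2\gamma_Lx_L,x_L)$ one checks $y_L=y_R=0$, so $\Delta_1=\Delta_2=\Delta_3=0$ while $\Delta_4\ne0$ (order-three weak focus, Theorem~\ref{thm:7}(c), since $\gamma_L\ne0$ and $x_L\ne0$). I would differentiate the full parameter-dependent expressions for $\Delta_1,\Delta_2,\Delta_3$ coming from \eqref{eq:15}--\eqref{eq:18} with respect to $(\gamma_R,b,x_R)$ and evaluate at this point. Writing $\e:=\e_L^{-}=\e_R^{+}$, the vanishing $y_L=y_R=0$ removes every contribution carrying a factor $y_R$ — in particular all derivatives of the $(1+\e_R^{+})^2y_R^2$ piece of $R_3$ — and forces $\partial\Delta_3/\partial b=0$. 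Consequently the $\gamma_R$-row of the Jacobian is $(\pi\e,0,0)$ and the $b$-column is $(0,\,2\e(1+\e),\,0)^{T}$; expanding along the first row leaves an upper-triangular $2\times2$ block, and the determinant collapses to the single product $2\pi\e^{3}(1+\e)(\e^{2}-1)(1+\gamma_L^{2})x_L$. Because $\e^{2}-1=\e^{-2\pi\gamma_L}-1\ne0$ precisely when $\gamma_L\ne0$, and $x_L\ne0$ by hypothesis, the Jacobian is nonsingular, so the bifurcation has codimension three and three limit cycles of big amplitude appear nearby.

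The hard part is exactly this last determinant, and the care it demands is conceptual rather than long: one must differentiate the \emph{full} expressions for $\Delta_2$ and $\Delta_3$ as functions of all the parameters — not the reduced forms \eqref{eq:20}--\eqref{eq:21}, which are valid only on the bifurcation variety — and only afterwards specialize to the critical point. The feature that makes the computation tractable is the double vanishing $y_L=y_R=0$ there, which annihilates the quadratic-in-$y$ terms and their derivatives and collapses the $3\times3$ Jacobian to the clean nonvanishing factor $(\e^{2}-1)(1+\gamma_L^{2})x_L$. With the submersion property established in each case, local surjectivity of the coefficient map lets us realize the alternating-sign configurations described above, completing the count of $1$, $2$, and $3$ limit cycles and thereby the second statement of Theorem~\ref{thm:4}.
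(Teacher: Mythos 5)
Your proposal is correct in substance and, at its core, coincides with the paper's own proof (Lemma~\ref{lem:9}): you fix $\gamma_L\ne0$, $x_L\ne0$, differentiate the \emph{full} parameter-dependent coefficients $\Delta_1,\Delta_2,\Delta_3$ with respect to $(\gamma_R,b,x_R)$, and exploit the vanishing $y_L=y_R=0$ at the critical point to collapse the Jacobian. Your determinant $2\pi\e^{3}(1+\e)(\e^{2}-1)(1+\gamma_L^{2})x_L$, with $\e=\e_L^{-}$, is exactly the paper's value $-2\pi (\e_L^{-})^{3}(1-\e_L^{-})(1+\e_L^{-})^{2}(1+\gamma_L^{2})x_L$, since $(\e^{2}-1)(1+\e)=-(1-\e)(1+\e)^{2}$; likewise your row/column analysis ($\partial\Delta_1/\partial b=\partial\Delta_1/\partial x_R=0$, $\partial\Delta_3/\partial b=0$ forced by $y_L=y_R=0$) reproduces the paper's matrix. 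Your triangular-Jacobian treatments of statements (a) and (b) supply arguments the paper explicitly omits, and they are consistent with Theorem~\ref{thm:7} and Lemma~\ref{lem:6}.

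The one place where you genuinely diverge, and where your write-up has a gap, is the passage from transversality to the conclusion. The paper combines the Implicit Function Theorem with the Weierstrass Preparation Theorem to write $\Delta(u,\delta)=(\delta_1 u+\delta_2 u^2+\delta_3 u^3+u^4)\widetilde{\Delta}(u,\delta)$ with $\widetilde{\Delta}$ non-vanishing, see \eqref{eq:24}; this factorization simultaneously yields the \emph{upper} bound (no more than three positive zeros near $u=0$, uniformly for nearby parameters, which is what justifies ``co-dimension three'' and ``up to three'') and, through the discriminant of the cubic factor and Descartes' rule, the full bifurcation diagram of Figure~\ref{fi:3}. Your alternating-sign ladder argument, as stated, only produces \emph{at least} $k$ simple positive zeros; the claims ``produces exactly $k$'' and ``so the bifurcation point has codimension $k$'' do not follow from the ladder alone. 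To close this you need a division or Rolle-type argument: for instance, since $\Delta_4\ne0$ persists on a full neighborhood of the critical point in parameter space, the third derivative with respect to $u$ of $u^{-1}\Delta(u)$ is non-vanishing for small $u>0$, so $\Delta$ has at most three positive zeros there; alternatively, invoke Weierstrass preparation as the paper does. With that addition your argument becomes complete and equivalent to the paper's.
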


It should be noticed that statement (a) of Theorem \ref{thm:8} is the only that could apply to continuous systems \eqref{eq:2},  providing the bifurcation of a unique limit cycle from the corresponding weak focus of order $1.$ Recall, as we have explained in the introduction, that such systems can exhibit at most one limit cycle.

The most degenerate case comes from the situation described in statement (c) of Theorem~\ref{thm:7}, when the periodic orbit at infinity behaves like a weak-focus of order three. In such a case, we have $\gamma_L=-\gamma_R\ne0$, $y_L=y_R,$ and $x_L=x_R\ne0$, so that from \eqref{eq:8}, we have $b=2\gamma_L x_L$. We will omit the proof of statements (a) and (b), paying only attention to the more involved statement (c). In fact the existence of a Hopf bifurcation is clear from \eqref{eq:20} and the linearity condition \eqref{eq:19}. We know from Theorem~\ref{thm:7} that the maximal degeneration of the periodic orbit at infinity arises when we are in the situation of statement (c). Thus, we can assume that the parameters $\gamma_L\ne0$ and $x_L\ne0$ are fixed, while we have at our disposal the three remaining parameters $\gamma_R,$ $b,$ and $x_R$. For the critical situation when $\gamma_L=-\gamma_R$, $y_L=y_R,$ and $x_L=x_R$ we know that the periodic orbit at infinity behaves like a weak-focus of order 3. Note that then condition \eqref{eq:8} reads $b=2\gamma_Lx_L-y_L=2\gamma_Lx_L+y_L$, so that we have indeed $y_L=y_R=0$; therefore, the critical value for $b$ is $2\gamma_Lx_L\ne0$. In short, we can state the following result that allows us to complete the proof of Theorem~\ref{thm:8}.

\begin{lemma}\label{lem:9} Consider system \eqref{eq:2}, or equivalently system \eqref{eq:7},  for  $\gamma_L\ne0$ and $x_L\ne0$ fixed and the three remaining parameters $\gamma_R$, $b,$ and $x_R$ in a neighborhood of the critical point $(\gamma_R,b,x_R)=(-\gamma_L,2\gamma_Lx_L,x_L)$, where the periodic orbit at infinity behaves like a weak-focus of order $3$, so that the coefficients $\Delta_i(\gamma_R,b,x_R)$ satisfy 
\[
 \Delta_i(-\gamma_L,2\gamma_Lx_L,x_L)=0,
\]
for $i=1,2,3,$ while
\begin{equation}\label{eq:23}
 \Delta_4(-\gamma_L,2\gamma_Lx_L,x_L)=-\dfrac{4\e_L^{-}}{3}\left(1+(\e_L^{-})^3\right)(1 +  \gamma_L^2) \gamma_L x_L^3\ne0.
\end{equation}
Furthermore, there exist values for $(\gamma_R,b,x_R)$ in such a neighborhood where the system has $3$ hyperbolic periodic orbits of big amplitude.
\end{lemma}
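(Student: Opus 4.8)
The plan is to realize Lemma~\ref{lem:9} as a standard unfolding of a weak-focus of order three. I would show that the map sending the three free parameters $(\gamma_R,b,x_R)$ to the first three displacement coefficients $(\Delta_1,\Delta_2,\Delta_3)$ is a local diffeomorphism at the critical point, so that these coefficients can be prescribed independently and arbitrarily small while $\Delta_4$ stays bounded away from $0$ by \eqref{eq:23}; then a sign-alternation argument produces three simple positive zeros of the displacement map $\Delta$, each of which is a hyperbolic limit cycle.

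First I would set $p=(\gamma_R,b,x_R)$, $p_0=(-\gamma_L,2\gamma_Lx_L,x_L)$, and consider $\Phi(p)=(\Delta_1(p),\Delta_2(p),\Delta_3(p))$, with $\Phi(p_0)=(0,0,0)$ by hypothesis. The key is that $D\Phi(p_0)$ is nonsingular, and this follows from the nested structure already displayed in \eqref{eq:19}, \eqref{eq:20}, and \eqref{eq:21}. From \eqref{eq:19} one has $\Delta_1=\e^{\pi\gamma_R}-\e^{-\pi\gamma_L}$, so $\partial_{\gamma_R}\Delta_1=\pi\e_R^{+}\ne0$ while $\partial_b\Delta_1=\partial_{x_R}\Delta_1=0$; hence the first row of $D\Phi(p_0)$ is $(\pi\e_L^{-},0,0)$ and $\det D\Phi(p_0)=\pi\e_L^{-}\,M$, where $M=\partial(\Delta_2,\Delta_3)/\partial(b,x_R)$ is evaluated on the slice $\{\gamma_R=-\gamma_L\}$. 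On that slice, \eqref{eq:20} together with $y_L=2\gamma_Lx_L-b$ and $y_R=-2\gamma_Lx_R+b$ gives $\Delta_2=2\e_L^{-}(1+\e_L^{-})\bigl(b-\gamma_L(x_L+x_R)\bigr)$, so $\nabla_{(b,x_R)}\Delta_2|_{p_0}=2\e_L^{-}(1+\e_L^{-})(1,-\gamma_L)\ne0$; restricting further to the curve $\{\Delta_2=0\}$, where \eqref{eq:21} holds, differentiating $\Delta_3=\tfrac{\e_L^{-}}{2}(1-(\e_L^{-})^2)(1+\gamma_L^2)(x_L^2-x_R^2)$ along that curve yields a derivative proportional to $\e_L^{-}(\e_L^{-}-1)(1+\e_L^{-})(1+\gamma_L^2)x_L$, nonzero precisely because $\gamma_L\ne0$ (so $\e_L^{-}\ne1$) and $x_L\ne0$. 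Thus $\nabla\Delta_3$ is not parallel to $\nabla\Delta_2$ on the slice, $M\ne0$, and $D\Phi(p_0)$ is invertible.

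With $D\Phi(p_0)$ invertible, the Inverse Function Theorem lets me choose, for any prescribed small triple $(\delta_1,\delta_2,\delta_3)$, parameters $p$ near $p_0$ with $\Delta_i(p)=\delta_i$ for $i=1,2,3$; by continuity $\Delta_4(p)$ stays close to the value in \eqref{eq:23}, hence nonzero and of fixed sign. I would then factor $\Delta(u_0)=u_0\,g(u_0)$ with $g(u_0)=\Delta_1+\Delta_2u_0+\Delta_3u_0^2+\Delta_4u_0^3+\cdots$, fix three small values $0<r_1<r_2<r_3$, and select $\delta_3=-\Delta_4(r_1+r_2+r_3)$, $\delta_2=\Delta_4(r_1r_2+r_1r_3+r_2r_3)$, $\delta_1=-\Delta_4\,r_1r_2r_3$, so that $g$ agrees to third order with $\Delta_4(u_0-r_1)(u_0-r_2)(u_0-r_3)$. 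For $r_i$ small these $\delta_i$ are small (so $p$ is near $p_0$), the three roots are simple, and the tail $O(u_0^4)$ is a higher-order perturbation that, by the implicit function theorem applied root by root (or a Rouch\'e-type estimate), preserves three simple positive zeros of $g$, equivalently of $\Delta$. Each simple zero is a transversal fixed point of the pseudo-return map, hence a hyperbolic limit cycle, and since $u_0=1/y_0\to0^{+}$ corresponds to $y_0\to+\infty$, these are orbits of big amplitude, as required.

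The main obstacle is the non-singularity of $D\Phi(p_0)$: one must be sure that the \emph{reduced} expressions \eqref{eq:20} and \eqref{eq:21}, valid only after the previous coefficients vanish, suffice to compute the relevant derivatives. This is legitimate because \eqref{eq:20} is the restriction of $\Delta_2$ to the codimension-one slice $\{\Delta_1=0\}$, on which $(b,x_R)$ are coordinates, and \eqref{eq:21} is the restriction of $\Delta_3$ to the curve $\{\Delta_1=\Delta_2=0\}$; the successive transversality of these restrictions is exactly what makes $D\Phi(p_0)$ triangular with nonzero diagonal. Everything else, namely the Implicit Function Theorem step and the sign-alternation yielding three hyperbolic cycles, is the routine finish of a co-dimension three degenerate Hopf bifurcation.
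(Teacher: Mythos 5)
Your proposal is correct, and it splits into two halves that compare differently with the paper's proof. The transversality half is essentially the paper's own argument: the paper computes exactly the Jacobian you describe, with first row $(\pi\e_L^{-},0,0)$ and the triangular block $\partial(\Delta_2,\Delta_3)/\partial(b,x_R)$ whose diagonal entries $2\e_L^{-}(1+\e_L^{-})$ and $-\e_L^{-}\bigl(1-(\e_L^{-})^2\bigr)(1+\gamma_L^2)x_L$ coincide with your restricted gradients, yielding the nonzero determinant $-2\pi(\e_L^{-})^3(1-\e_L^{-})(1+\e_L^{-})^2(1+\gamma_L^2)x_L$; your explicit justification that the reduced formulas \eqref{eq:20} and \eqref{eq:21} may legitimately be differentiated (successive restriction to $\{\Delta_1=0\}$ and to $\{\Delta_1=\Delta_2=0\}$, which leaves the determinant unchanged since the discarded terms lie in the ideal generated by the previous coefficients) is a valid account of a step the paper performs silently. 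The finishing half is genuinely different: the paper applies the Weierstrass Preparation Theorem to write $\Delta(u,\delta)=(\delta_1u+\delta_2u^2+\delta_3u^3+u^4)\widetilde{\Delta}(u,\delta)$ as in \eqref{eq:24} and then analyzes the discriminant of the quartic factor, which buys both the upper bound (no more than three positive zeros near infinity) and the complete local bifurcation diagram of Figure~\ref{fi:3}; you instead prescribe roots $0<r_1<r_2<r_3$, take $(\delta_1,\delta_2,\delta_3)$ to be the corresponding elementary symmetric functions scaled by $\Delta_4$, and preserve the three simple zeros under the $O(u_0^4)$ tail by a Rouch\'e/implicit-function argument. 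This is more elementary (no preparation theorem) and exhibits the three hyperbolic cycles directly, but it produces only the lower bound and no bifurcation set, which suffices for the lemma as stated. Two details you should tighten, neither a real gap: (i) your $\delta_i$ are defined through $\Delta_4$ evaluated at the perturbed parameters, which themselves depend on $\delta$; break this harmless circularity by using $\Delta_4$ at the critical point and absorbing the difference times $u_0^3$ into the error term; (ii) the Rouch\'e estimate needs the constant in $O(u_0^4)$ to be uniform in the parameters, which follows from joint analyticity of $\Delta$ in $(u_0,\gamma_R,b,x_R)$, and a scaling $r_i=\rho s_i$ with $s_i$ fixed makes the comparison between the tail, of size $O(\rho^4)$, and the cubic factor, of size comparable to $|\Delta_4|\rho^3$ at the test points, work out for $\rho$ small.
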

\begin{proof}
We start by computing the derivatives of the coefficients $(\Delta_1,\Delta_2,\Delta_3)$ with respect to the parameters $(\gamma_R,b,x_R)$ at the critical point. Clearly, we have
\[\frac{\partial \Delta_1}{\partial \gamma_R}=\pi \e_L^{-},
\quad \frac{\partial \Delta_1}{\partial b}=\frac{\partial \Delta_1}{\partial x_R}=0,
\]
so that we only need to compute at the critical point the following Jacobian matrix
\[
\frac{\partial (\Delta_2,\Delta_3)}{\partial (b,x_R)}=
\left(\begin{array}{cc}
2 \e_L^{-} (1 + \e_L^{-}) & -2 \e_L^{-} (1 + \e_L^{-})\gamma_L \\
0 & -\e_L^{-}  (1 -(\e_L^{-} )^2) (1 + \gamma_L^2) x_L 
\end{array}\right).
\]
Therefore, we obtain that for the Jacobian matrix at the critical point $(\gamma_R,b,x_R)=(-\gamma_L,2\gamma_Lx_L,x_L)$, we have 
\begin{equation*}
	\det\left( \frac{\partial (\Delta_1,\Delta_2,\Delta_3)}{\partial (\gamma_R,b,x_R)}\right)=-2\pi (\e_L^{-})^3(1 - \e_L^{-})(1 + \e_L^{-} )^2 (1 + \gamma_L^2) x_L \ne0.
\end{equation*}
So that there exists, in the working neighborhood, a one-to-one correspondence between the $\Delta_i$-values and the three free parameter values. 

The proof finishes using the Implicit Function Theorem and the Weierstrass Preparation Theorem, that allow us to take new local coordinates $\delta=(\delta_1,\delta_2,\delta_3)$ around zero so that, because $\Delta_4\ne0 $ in \eqref{eq:23}, we have
\begin{equation}\label{eq:24}
\Delta(u,\delta)=q(u,\delta)\widetilde{\Delta}(u,\delta)=(\delta_1 u+\delta_2 u^2+\delta_3 u^3+u^4)\widetilde{\Delta}(u,\delta),
\end{equation}
where $\widetilde{\Delta}$ is an analytical non-vanishing function at $(0,0).$ It is also clear that, under these conditions, there will be no more than three positive zeros. 
\begin{figure}[h]
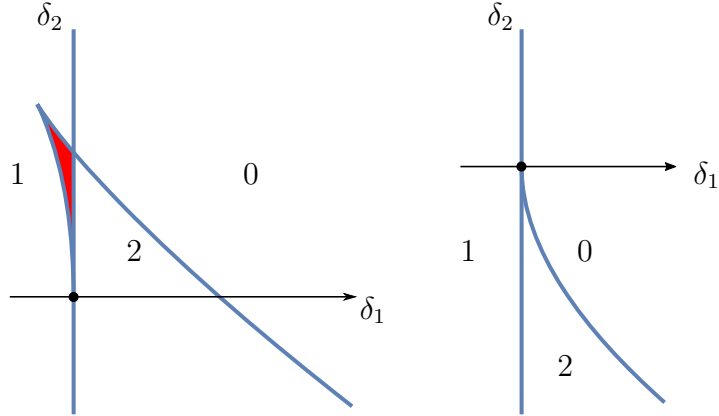

	\begin{overpic}{fig3a}
		\put(90,25){$\delta_1$}
		\put(7,101){$\delta_2$}
		\put(60,60){$0$}
		\put(0,60){$1$}
		\put(30,40){$2$}
	\end{overpic}
	\quad \quad \quad
	\begin{overpic}{fig3b}
		\put(60,60){$\delta_1$}
		\put(7,101){$\delta_2$}
		\put(30,40){$0$}
		\put(0,40){$1$}
		\put(25,10){$2$}
	\end{overpic}
	\caption{The number of positive solutions for $q(u,\delta)$ in the plane $(\delta_1,\delta_2)$ for $\delta_3<0$ (left) and $\delta_3\ge0$ (right). The region in red corresponds to the existence of $3$ positive solutions.}
	\label{fi:3}
\end{figure}  
The bifurcation curves shown in Figure~\ref{fi:3} follow directly studying when the discriminant of $q$ with respect to $u$ vanishes, namely on the varieties $\delta_1=0$ and $4\delta_1\delta_3^3-\delta_2^2\delta_3^2-18\delta_1\delta_2\delta_3+4\delta_2^3+2\delta_1^2=0.$ The cusp point in Figure~\ref{fi:3} (left) is located at $(\delta_1,\delta_2)=(\delta_3^3/27, \delta_3^2/3).$ This explains the small size of the parameters region where 3 positive zeros exist for $\delta_3<0$. Clearly, by the Descartes' rule there cannot be three positive zeros when $\delta_3>0.$
\end{proof}

We remark that, as an alternative approach to assure that the bifurcation in the above proof is determined with only the first four terms of the Taylor series of $\Delta$ whenever $\Delta_4\ne0$, we can take advantage of the ${\mathcal Z}_2$-Classification theorem in \cite{GS84}. Effectively, we can write
the function $q$ in \eqref{eq:24}, naming $u=v^2$ and removing a factor $v$, as
\[
\hat{q}(v)=\delta_1 v+\delta_2 v^3+\delta_3 v^5+v^7,
\]
which corresponds with case 5 of Figure 6.2 in \cite[page 269]{GS84}. As we have mentioned before, this bifurcation is by no means different from a degenerated Hopf bifurcation for a monodromic non-degenerated equilibrium point, i.e. those for which its Jacobian matrix has zero trace and positive determinant.

\section{Limit cycles bifurcating from the centers}\label{se:6}
In a fixed class of systems having centers, the maximum number of limit cycles that can bifurcate from a center is known as the \emph{local cyclicity} of that center. This is a very difficult problem and it is solved for a very few classes of differential systems. The analytic quadratic class is one of them and it was proved 70 years ago by Bautin that at most three limit cycles of small amplitude can bifurcate, see \cite{Bau1954}. Another instance is the class of cubic systems without quadratic nonlinearities, which was studied by Sibirski\u{\i} in \cite{Sib1965}, but the complete proof that only $5$ limit cycles of small amplitude bifurcate at the origin was done 30 years later by {\.Z}o{\l}{\c a}dek in \cite{Zol1994a}. Up to our knowledge, this question has not solved for other general classes, either for the complete cubic polynomial class. 

\medskip

In the non-smooth differential systems world, the existence of sliding segments, as is the case when $b$ is not zero in our main system \eqref{eq:2}, makes this problem very intricate because the return map is not analytic. This is not the case however when the return map is studied near the periodic orbit of infinity for system \eqref{eq:2}, as we have seen in Section~\ref{se:3}. Typically, the main technique used to bound the local cyclicity is the study of the Bautin ideal formed by the coefficients of the difference map $\Delta$ as we have defined in \eqref{eq:4} for our study. These coefficients are known as the \emph{Liapunov quantities} associated to the center-focus problem of a planar differential system, see more details in \cite{AndLeoGorMai1973}. We recall that in this context the Liapunov quantities are polynomials in the perturbation parameters, see \cite{CimGasManMan1997}. The finiteness property is proved usually using that the Bautin ideal is defined in a Noetherian ring because the number of parameters is finite, and that such ideal is radical. In our context, the coefficients $\Delta_i$ are not polynomials in the perturbation parameters, as we have already seen in Section~\ref{se:4}. 

\medskip

Due to the difficulties to study upper bounds, in this section, we deal with the study of lower bounds for the maximum number of limit cycles of big amplitude bifurcating from the centers presented in the classification Theorem~\ref{thm:3}. We will see that, for some of them, see Propositions~\ref{prop:11} and \ref{prop:12}, a transversal weak-foci curve of order $3$ emerges at the critical point in the parameters space corresponding to a center configuration. The transversality ensures again the existence of three limit cycles in a neighborhood of such a curve. A higher-order analysis is required and we prove that the local cyclicity changes when moving the parameters in the selected center family. That is, although generically the cyclicity of a family of centers remains constant, over some singular locus it can increase. We will closely follow the scheme of \cite{GinGouTor2020}.

\medskip

We start perturbing center family $(c)$ in Theorem~\ref{thm:5} because of its simplicity. The necessary computations for the other two are more involved. For simplicity, we will use the equivalent canonical form \eqref{eq:7} instead of \eqref{eq:2}.

\begin{proposition}\label{prop:10} Let  $\eta,\xi$ be non-zero real numbers. The local cyclicity of the periodic orbit at infinity of center type defined by $\gamma_L=-\gamma_R=- \eta,$ $x_L=-x_R=-\xi,$ and $b=0,$ when perturbed in the class of systems \eqref{eq:7}, is at least $2.$
\end{proposition}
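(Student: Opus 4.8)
The plan is to exploit the displacement function $\Delta(u_0)=L(u_0)-R(u_0)$ together with the explicit coefficient formulas \eqref{eq:15}--\eqref{eq:18}, and to show that when we perturb away from the center configuration $\gamma_L=-\gamma_R=-\eta$, $x_L=-x_R=-\xi$, $b=0$, we can independently control the first two Taylor coefficients $\Delta_1$ and $\Delta_2$, while keeping $\Delta_3\ne 0$. Recall from Theorem~\ref{thm:5}(c) that this center lives precisely on the variety $\gamma_L+\gamma_R=0$, $y_L=y_R$, $x_L=-x_R$ (with $b=0$). The strategy is the standard one for producing lower bounds on cyclicity: find a curve (or, here, a two-dimensional slice) of parameters along which the order-three weak-focus structure persists, and then perturb transversally so that $\Delta_1$ and $\Delta_2$ become small free quantities of \emph{alternating} sign, forcing two positive zeros of $\Delta$ by Descartes' rule.

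First I would fix the center configuration and compute, using \eqref{eq:19}, \eqref{eq:20}, and \eqref{eq:21}, the values of the first three coefficients. At the center we have $\Delta_1=\Delta_2=\Delta_3=0$, and by the same computation that produced \eqref{eq:22} the fourth coefficient is controlled by
\[
\Delta_4=-\dfrac{4\e_L^{-}}{3}\left(1+(\e_L^{-})^3\right)(1+\gamma_L^2)\gamma_L x_L^3,
\]
so with $\gamma_L=-\eta\ne0$ and $x_L=-\xi\ne0$ one checks $\Delta_4\ne0$. However, for a lower bound of $2$ I only need to release \emph{two} of the defining center conditions and verify that the resulting $\Delta_1,\Delta_2$ can be prescribed independently with $\Delta_3$ staying away from zero on the center locus of the remaining parameter. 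Concretely, I would choose as unfolding parameters a pair among $(\gamma_R,b,x_R)$ and compute the $2\times 2$ Jacobian $\partial(\Delta_1,\Delta_2)/\partial(\text{params})$ at the critical point; using \eqref{eq:19} one has $\partial\Delta_1/\partial\gamma_R=\pi\e_L^{-}\ne0$, and from \eqref{eq:20} together with the expression for $y_R$ one gets a nonzero derivative of $\Delta_2$ in the $b$ (or $x_R$) direction, so that the Jacobian is invertible.

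The key computation is therefore to verify the nonvanishing of
\[
\det\frac{\partial(\Delta_1,\Delta_2)}{\partial(\gamma_R,b)}\Big|_{\text{crit}}\ne0,
\]
which gives a local diffeomorphism between the parameters and $(\Delta_1,\Delta_2)$ near zero, and simultaneously to confirm that along the slice where $\Delta_1=\Delta_2=0$ the coefficient $\Delta_3$ does \emph{not} vanish (this is where the condition $x_L=-x_R\ne0$, i.e. $\xi\ne0$, enters, since from \eqref{eq:21} one sees $\Delta_3$ is governed by $x_L^2-x_R^2$). Once the diffeomorphism is established, I would invoke the argument of Lemma~\ref{lem:9}: factor $\Delta(u_0)=(\delta_1 u_0+\delta_2 u_0^2+u_0^3)\widetilde\Delta(u_0)$ via the Weierstrass Preparation Theorem with $\widetilde\Delta$ non-vanishing, and then choose $\delta_1>0$ small and $\delta_2<0$ of appropriate size so that the cubic bracket has two positive roots. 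This yields at least $2$ limit cycles of big amplitude.

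The main obstacle I anticipate is not the invertibility of the Jacobian but rather confirming that $\Delta_3$ stays nonzero on the full center slice after perturbation — that is, checking that the higher coefficient does not degenerate in a way that would change the sign structure and spoil Descartes' count. This requires carrying the expansion of $\Delta_3$ one order beyond the leading behavior given in \eqref{eq:21}, respecting the reversibility \eqref{eq:10} that organizes the $L$- and $R$-coefficients, and then a careful sign analysis analogous to the discriminant study depicted in Figure~\ref{fi:3}. Since only a lower bound of $2$ is claimed (rather than the full order-three cyclicity handled elsewhere), this sign bookkeeping is lighter than in Lemma~\ref{lem:9}, and the transversality of the two-parameter unfolding should suffice to close the argument.
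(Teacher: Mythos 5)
There is a genuine gap, and it sits at the heart of your argument. You propose to unfold only with the pair $(\gamma_R,b)$ and claim that on the slice $\Delta_1=\Delta_2=0$ the coefficient $\Delta_3$ stays away from zero because, by \eqref{eq:21}, ``$\Delta_3$ is governed by $x_L^2-x_R^2$'' and $x_L=-x_R\ne0$. You have the sign of this condition backwards: with $x_L=-\xi$ and $x_R=\xi$ held \emph{fixed}, the antisymmetry $x_L=-x_R$ forces $x_L^2-x_R^2=0$ identically, so the factor in \eqref{eq:21} vanishes rather than staying nonzero. Concretely, within your two-parameter family, imposing $\Delta_1=0$ gives $\gamma_R=\eta$, and then imposing $\Delta_2=0$ gives $y_L=y_R$, i.e.\ $b=0$; these two conditions return you exactly to the center of Theorem~\ref{thm:5}(c), where $\Delta\equiv0$ and \emph{every} coefficient vanishes. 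Hence there is no order-two weak focus anywhere on that slice, and the Weierstrass factorization $\Delta=(\delta_1u_0+\delta_2u_0^2+u_0^3)\widetilde\Delta(u_0)$ you invoke is unavailable, since Weierstrass preparation requires $\Delta(\cdot,0)$ to have a zero of finite order, which fails at a center. No higher-order expansion of $\Delta_3$ can rescue this: on the slice $\Delta_1=\Delta_2=0$ of your family, $\Delta_3$ is exactly zero, not merely zero to leading order. For the same reason, your preliminary claim that $\Delta_4\ne0$ at the center is wrong: formula \eqref{eq:22} was derived under $x_R=x_L$ (the weak focus of order three of Theorem~\ref{thm:7}(c)), not under $x_R=-x_L$, and at the center $\Delta_4=0$ as well.

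The repair — and it is what the paper does — is to add a third unfolding parameter that breaks the relation $x_L^2=x_R^2$, namely a perturbation of $x_L$ (or $x_R$) itself. The paper perturbs all five parameters, expands $\Delta_1,\Delta_2,\Delta_3$ to first order in $\varepsilon$, and verifies that the Jacobian with respect to $(\varepsilon_1,\varepsilon_3,\varepsilon_4)$ (perturbations of $\gamma_L$, $b$, and $x_L$) has nonzero determinant
\[
2\pi\xi\,(\e^{\eta\pi})^3(\e^{\eta\pi}-1)(\e^{\eta\pi}+1)^2(\eta^2+1)\ne0 .
\]
This rank-three condition, via the Implicit Function Theorem, yields local coordinates in which $\Delta_i=\delta_i$ for $i=1,2,3$; on the curve $\delta_1=\delta_2=0$, $\delta_3\ne0$ one then has a genuine transversal family of weak foci of order two, and the alternating-sign argument of Section~\ref{se:5} produces the two big-amplitude limit cycles. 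With your two parameters alone, the methods you invoke justify at most one limit cycle.
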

\begin{proof}
	We start taking the perturbed system~\eqref{eq:7} being 
	\[(\gamma_L,\gamma_R,b,x_L,x_R)=(-\eta+\varepsilon_1,\eta+\varepsilon_2,\varepsilon_3,-\xi+\varepsilon_4,\xi+\varepsilon_5)
	\]	
	and computing the coefficients of $\Delta_i(\eta,\xi;\varepsilon),$ for $i=1,\ldots,4,$ defined in Section~\ref{se:4},  being $\varepsilon= (\varepsilon_1,\ldots,\varepsilon_5).$ The second step is the computation of their first-order expansions $\Delta^{1}_i(\eta,\xi;\varepsilon)$ for the Taylor series of $\Delta_i$ with respect to $\varepsilon= (\varepsilon_1,\ldots,\varepsilon_5).$  We can easily check that the matrix (of size $3\times 5$) defined by the coefficients of $(\Delta_1^{1},\Delta_2^{1},\Delta_3^{1})$ with respect to $\varepsilon$ has rank $3$ whenever the parameters $\eta,\xi$ are non-vanishing. In fact, the Jacobian matrix of $(\Delta_1^{1},\Delta_2^{1},\Delta_3^{1})$ with respect to $(\varepsilon_1,\varepsilon_3,\varepsilon_4)$ has a determinant 
	\[
	2\pi\xi (\e^{\eta\pi})^3 (\e^{\eta\pi}-1)(\e^{\eta\pi}+1)^2(\eta^2+1)\ne 0.
	\]
	We notice that the rank of the matrix (of size $4\times 5$) defined by the coefficients of $(\Delta_1^{1}, \Delta_2^{1}, \Delta_3^{1},\Delta_4^{1})$ with respect to $\varepsilon$ remains unchanged, being also $3$.	
	Hence, using the Implicit Function Theorem there exist new local coordinates $\varepsilon_\delta=(\delta_1,\varepsilon_2,\delta_2,\delta_3,\varepsilon_5)$ in a neighborhood of the origin in the parameters space, such that $\Delta_i(\eta,\xi;\varepsilon_\delta)=\delta_i$ for $i=1,2,3.$ The proof finishes using the same argument as in Section~\ref{se:5} because we have a transversal curve of weak-foci of order $2$ bifurcating from each center value $(\eta,\xi)$, in the $2$-dimensional manifold in the parameters space. Moreover, the transversality assures the bifurcation of up to $2$ limit cycles of big amplitude. 
\end{proof}

In what follows we extend the notation $\Delta^{j}_i(\varepsilon)$ for the $j$-th order truncation of the Taylor series of $\Delta_i$ with respect to $\varepsilon.$ We notice that in the above proposition we have not get more limit cycles using $\Delta_4,$ even arriving up to fourth-order. Indeed, $\Delta_4^{4}$ vanishes when $(\delta_1,\varepsilon_2,\delta_2,\delta_3,\varepsilon_5)=(0,\varepsilon_2,0,0,\varepsilon_5).$

\smallskip

Next result provides the number of limit cycles that can bifurcate from center family $(a)$ in Theorem~\ref{thm:5}. It gives the bifurcation diagram of the number of limit cycles in a 2-dimensional manifold.

\begin{proposition}\label{prop:11}
Let  $\eta,\xi$ be non-zero real numbers. The local cyclicity of the periodic orbit at infinity of center type defined by $\gamma_L=\gamma_R=0,$ $x_L=\eta$, $x_R=\xi,$ and $b=0$, when perturbed in the class of systems \eqref{eq:7}, is at least $1$ when $\eta=-\xi,$ at least $2$ when $\eta\ne\pm\xi,$ and at least $3$ when $\eta=\xi.$
\end{proposition}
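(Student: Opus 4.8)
The strategy follows the scheme already used in Proposition~\ref{prop:10}, but now we must discriminate three regimes according to the relation between $\eta$ and $\xi$. I would perturb the center of type $(a)$ by taking
\[
(\gamma_L,\gamma_R,b,x_L,x_R)=(\varepsilon_1,\varepsilon_2,\varepsilon_3,\eta+\varepsilon_4,\xi+\varepsilon_5),
\]
and compute the coefficients $\Delta_i(\eta,\xi;\varepsilon)$ for $i=1,\ldots,4$ using the explicit formulas \eqref{eq:15}--\eqref{eq:18} together with the displacement relations \eqref{eq:19}--\eqref{eq:22}. Since at the center value we have $\gamma_L=\gamma_R=0$ (so $\e_L^{-}=\e_R^{+}=1$) and $y_L=y_R=0$, each $\Delta_i$ vanishes at $\varepsilon=0$, and the essential information lives in the first-order truncations $\Delta_i^{1}(\eta,\xi;\varepsilon)$.

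\textbf{The generic case $\eta\ne\pm\xi$.} Here I would show that the $3\times5$ matrix of coefficients of $(\Delta_1^{1},\Delta_2^{1},\Delta_3^{1})$ with respect to $\varepsilon$ has rank $3$. From \eqref{eq:19}, $\Delta_1$ depends to first order only on $\varepsilon_1+\varepsilon_2$; from \eqref{eq:20}, $\Delta_2$ picks up the difference $y_R-y_L$ which, linearized at the center, involves $\varepsilon_3$ and the products $\gamma_\Lambda x_\Lambda$; and from \eqref{eq:21}, $\Delta_3$ carries the factor $x_L^2-x_R^2=\eta^2-\xi^2$ at leading order. The hypothesis $\eta^2\ne\xi^2$ is precisely what keeps this last coefficient from degenerating, so a suitable $3\times3$ minor (say with respect to $(\varepsilon_1,\varepsilon_3,\varepsilon_4)$) is nonzero. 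The Implicit Function Theorem then gives local coordinates realizing $(\Delta_1,\Delta_2,\Delta_3)=(\delta_1,\delta_2,\delta_3)$ independently, yielding a transversal curve of weak-foci of order $2$ and hence at least $2$ limit cycles, exactly as in Section~\ref{se:5}.

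\textbf{The degenerate cases.} When $\eta=-\xi$, the expression \eqref{eq:21} for $\Delta_3$ vanishes identically at the center (since $x_L^2=x_R^2$), so only $(\Delta_1,\Delta_2)$ remain independent; the rank drops to $2$ and one recovers merely the order-$1$ weak focus, giving cyclicity at least $1$. The interesting case is $\eta=\xi$, where I expect the analysis to be most delicate: now the leading coefficient of $\Delta_3$ again degenerates, but the hypothesis moves us into the regime where $\Delta_4$ of \eqref{eq:22} becomes the governing term, with value proportional to $\gamma_L x_L^3$ at the order-$3$ critical configuration of Theorem~\ref{thm:7}(c). The plan is to show that, even though the first-order truncations of $(\Delta_1,\Delta_2,\Delta_3)$ no longer span a $3$-dimensional space, a \emph{higher-order} expansion does: one must push the Taylor series in $\varepsilon$ to second (or higher) order and exhibit, following the higher-order cyclicity technique of \cite{GinGouTor2020}, three functionally independent directions together with a nonvanishing $\Delta_4$. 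Concretely, I would verify that the map $\varepsilon\mapsto(\Delta_1,\Delta_2,\Delta_3)$, combined with the sign of $\Delta_4$, admits an independent set of partial derivatives of mixed order, so that the Weierstrass Preparation Theorem again factors $\Delta$ as in \eqref{eq:24} with three positive zeros available.

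\textbf{Main obstacle.} The hard part is the case $\eta=\xi$. Here the low-order Jacobian is rank-deficient, so transversality in the first-order sense fails and one cannot simply invoke the Implicit Function Theorem on $(\Delta_1,\Delta_2,\Delta_3)$ as in the generic case. The delicate point is to confirm that the singular locus $\eta=\xi$ is exactly where the local cyclicity \emph{jumps} from $2$ to $3$, which requires controlling the higher-order contributions to the Liapunov quantities and checking that they are not obstructed by the remaining perturbation directions. Establishing this controlled rank-increase through a higher-order expansion—rather than merely observing the first-order drop—is the crux that separates this proposition from the straightforward Proposition~\ref{prop:10}.
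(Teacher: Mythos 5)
Your setup and overall strategy coincide with the paper's own proof (the same perturbation of \eqref{eq:7}, first-order expansions of the $\Delta_i$, Implicit Function Theorem in the generic case, higher-order analysis when $\eta=\xi$), but there are two concrete problems. The first is a computational error in the generic case: since this center has $\gamma_L=\gamma_R=0$, the perturbations $\varepsilon_4,\varepsilon_5$ of $x_L,x_R$ do not appear at all in the first-order truncations---they enter $y_\Lambda$ and the coefficients \eqref{eq:15}--\eqref{eq:18} only through products of the form $\gamma_\Lambda x_\Lambda$, hence at second order. Indeed the paper's expansions \eqref{eq:25} read $\Delta_1^{1}=\pi(\varepsilon_1+\varepsilon_2)$, $\Delta_2^{1}=-4\eta\varepsilon_1+4\xi\varepsilon_2+4\varepsilon_3$, $\Delta_3^{1}=\pi\eta^2\varepsilon_1+\pi\xi^2\varepsilon_2$, so the minor you propose, with respect to $(\varepsilon_1,\varepsilon_3,\varepsilon_4)$, has an identically zero column and vanishes; the minor that works is the one with respect to $(\varepsilon_1,\varepsilon_2,\varepsilon_3)$, with determinant $4\pi^2(\eta^2-\xi^2)$. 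You carried the choice over from Proposition~\ref{prop:10}, where it is legitimate precisely because there $\gamma_L=-\eta\ne0$ at the center, so $\varepsilon_4$ acts at first order; this structural difference is also the reason the case $\eta=\xi$ below cannot be settled at first order. Relatedly, in the case $\eta=-\xi$ you argue that $\Delta_3$ ``vanishes identically'' by \eqref{eq:21}, but \eqref{eq:21} presupposes $\Delta_1=\Delta_2=0$; the correct observation is that $\Delta_3^{1}=\eta^2\Delta_1^{1}$ is linearly dependent on $\Delta_1^{1}$, so the rank drops to $2$ (the conclusion, cyclicity at least $1$, is right).

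The genuine gap is the case $\eta=\xi$, which is exactly where the ``at least $3$'' claim lives: you correctly diagnose that first-order transversality fails and that a higher-order analysis in the spirit of \cite{GinGouTor2020} is required, but you stop at a plan and never produce the computation on which everything hinges. What the paper does is: normalize $\Delta_1=\delta_1$, $\Delta_2=\delta_2$ by the Implicit Function Theorem, restrict to $\delta_1=\delta_2=0$, and compute the second-order truncations $\Delta_3=\pi\varepsilon_3(\varepsilon_4-\varepsilon_5)+O_3(\hat\varepsilon)$ and $\Delta_4=-\tfrac43\xi^2\varepsilon_3+O_2(\hat\varepsilon)$ as in \eqref{eq:26}; then, taking $\varepsilon_5=0$ and blowing up $\varepsilon_4=\tilde\varepsilon_4\varepsilon_3$, one gets $\Delta_3=\varepsilon_3^2\bigl(\pi\tilde\varepsilon_4+\varepsilon_3\,O_0(\varepsilon_3,\tilde\varepsilon_4)\bigr)$ and $\Delta_4=\varepsilon_3\bigl(-\tfrac43\xi^2+\varepsilon_3\,O_0(\varepsilon_3,\tilde\varepsilon_4)\bigr)$, and a Loud-type use of the Implicit Function Theorem \cite{Lou1961} produces a coordinate $\delta_3$ with $\Delta_3=\varepsilon_3^2\delta_3$. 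On $\delta_3=0$ with $\varepsilon_3\ne0$ small, $\Delta_4\ne0$ because $\xi\ne0$, so a curve of weak-foci of order exactly $3$ emanates from the center value, and the bifurcation argument of Section~\ref{se:5} then yields three big-amplitude limit cycles. Note also that your stated target---making $(\Delta_1,\Delta_2,\Delta_3)$ functionally independent near the center together with $\Delta_4\ne0$---is not what happens, nor what is needed: $\Delta_3$ is only controllable after the factor $\varepsilon_3^2$ is extracted along the blown-up direction, and it is the existence of this order-$3$ weak-focus curve, not first-order independence of three coefficients, that produces the cyclicity jump. Without exhibiting the product structure $\varepsilon_3(\varepsilon_4-\varepsilon_5)$ of $\Delta_3$ and the nonvanishing of $\Delta_4$ along the correct branch, the claim for $\eta=\xi$ remains unproven.
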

\begin{proof}
We consider a perturbation in system~\eqref{eq:7} with
\[(\gamma_L,\gamma_R,b,x_L,x_R)=(\varepsilon_1,\varepsilon_2,\varepsilon_3,\eta +\varepsilon_4,\xi+\varepsilon_5).
\]	
The Taylor series of the coefficients of $\Delta_i(\eta,\xi;\varepsilon),$ defined in Section~\ref{se:4}, with respect to $\varepsilon= (\varepsilon_1,\ldots,\varepsilon_5)$ write as
\begin{equation}\label{eq:25}
\begin{aligned}
\Delta_1(\eta,\xi;\varepsilon)&=\pi \varepsilon_1+\pi \varepsilon_2+O_2(\varepsilon),\\
\Delta_2(\eta,\xi;\varepsilon)&=-4 \eta \varepsilon_1+4\xi \varepsilon_2+4 \varepsilon_3+O_2(\varepsilon),\\
\Delta_3(\eta,\xi;\varepsilon)&=\pi \eta^2 \varepsilon_1+\pi\xi^2 \varepsilon_2+O_2(\varepsilon),\\
\Delta_4(\eta,\xi;\varepsilon)&=-\frac{4}{3} \eta^3 \varepsilon_1+\frac{4}{3}\xi^3 \varepsilon_2+O_2(\varepsilon).\\
\end{aligned}
\end{equation}
The matrix of the first three linear terms with respect to $(\varepsilon_1,\varepsilon_2,\varepsilon_3)$ has a determinant $4\pi^2(\eta^2-\xi^2).$ As it is non-zero when $\eta\ne\pm\xi,$ reasoning as in the proof of Proposition~\ref{prop:10}, it is clear that there exists a change of variables in the parameters space such that $\Delta_i(\eta,\xi;\varepsilon)=\delta_i,$ for $i=1,2,3,$ and that the local cyclicity is at least $2.$ Straightforward computations show that $\Delta_4^2=0$ when $\delta_1=\delta_2=\delta_3=0$ and we can not get more limit cycles up to a second order analysis.
\smallskip

When $\eta=-\xi\ne0,$  the first two linear Taylor series in \eqref{eq:25} are linearly independent and the rank of the corresponding matrix adding the next two rows does not increase. Using again the Implicit Function Theorem, we can use new local coordinates $(\delta_1,\varepsilon_2,\delta_2,\varepsilon_4,\varepsilon_5)$ in a neighborhood of the origin so that $\Delta_i=\delta_i,$ for $i=1,2.$  Straightforward computations show that $\Delta_3^2=\Delta_4^2=0$ when $\delta_1=\delta_2=0$ and we can not get more limit cycles up to a second order analysis.

\smallskip

Finally, when $\eta=\xi\ne0$ we need to work with Taylor series of second order. Doing as above and using again the Implicit Function Theorem, we can take new local coordinates $(\delta_1,\delta_2,\varepsilon_3,\varepsilon_4,\varepsilon_5)$ such that $\Delta_i=\delta_i,$ for $i=1,2.$ Taking $\delta_1=\delta_2=0,$ we have that \eqref{eq:25} reduces to
\begin{equation}\label{eq:26}
\begin{aligned}
\Delta_3(\xi;\hat\varepsilon)&=\pi\varepsilon_3(\varepsilon_4-\varepsilon_5)+O_3(\hat \varepsilon),\\
\Delta_4(\xi;\hat\varepsilon)&=-\frac{4}{3}\xi^2\varepsilon_3+O_2(\hat\varepsilon),\\
\end{aligned}
\end{equation}
with $\hat{\varepsilon}=(\varepsilon_3,\varepsilon_4,\varepsilon_5).$ From now on, we can simplify the computations taking $\varepsilon_5=0.$ Then, doing a blow-up change of coordinates $(\varepsilon_3,\varepsilon_4)=(\varepsilon_3,\tilde \varepsilon_4\varepsilon_3),$ equation \eqref{eq:26} writes as
\begin{equation*}
\begin{aligned}
\Delta_3(\xi;\varepsilon_3,\tilde\varepsilon_4)&=\varepsilon_3^2(\pi \tilde \varepsilon_4+\varepsilon_3\,O_0(\varepsilon_3,\tilde \varepsilon_4)),\\
\Delta_4(\xi;\varepsilon_3,\tilde\varepsilon_4)&=\varepsilon_3(-\frac{4}{3}\xi^2+\varepsilon_3\,O_0(\varepsilon_3,\tilde \varepsilon_4)).\\
\end{aligned}
\end{equation*}
The Implicit Function Theorem allows us to define a new local coordinate $\delta_3$ so that $\Delta_3(\xi;\varepsilon_3,\tilde\varepsilon_4)=\varepsilon_3^2\delta_3.$  The proof finishes by imposing that the new coordinate $\delta_3$ to be zero and checking that when $\varepsilon_3$ is small but not zero, the fourth coefficient $\Delta_4$ is non-vanishing because $\xi\ne0$. Consequently, we have a third-order weak-focus curve that is born from the critical parameter values corresponding to the center, from which the $3$ limit cycles can bifurcate.
\end{proof}

The strategy used at the end of the last proof is an interesting non-standard use of the Implicit Function Theorem in this field, even it frequently employed in desingularization procedures in singularity theory. Up to the best of our knowledge, this procedure goes back to Loud in \cite{Lou1961}.

\smallskip

We finish this section perturbing the remaining family (b) in Theorem~\ref{thm:5}.
\begin{proposition}\label{prop:12}
Let  $\eta$ be a non-zero real number. The local cyclicity of the periodic orbit at infinity of center type defined by $\gamma_L=-\gamma_R=-\eta,$ $x_L=x_R=0,$ and $b=0$, when perturbed in the class of systems \eqref{eq:7}, is at least $3.$
\end{proposition}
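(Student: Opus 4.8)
The plan is to follow the same scheme established in Propositions~\ref{prop:10} and \ref{prop:11}, perturbing the center family (b) and analyzing the rank of the linearized displacement coefficients. First I would take the perturbation
\[
(\gamma_L,\gamma_R,b,x_L,x_R)=(-\eta+\varepsilon_1,\eta+\varepsilon_2,\varepsilon_3,\varepsilon_4,\varepsilon_5),
\]
and compute the coefficients $\Delta_i(\eta;\varepsilon)$ for $i=1,2,3,4$, together with their first-order Taylor expansions $\Delta_i^{1}(\eta;\varepsilon)$ with respect to $\varepsilon=(\varepsilon_1,\ldots,\varepsilon_5)$. Since the center (b) is itself a weak-focus of order $3$ when broken via statement (c) of Theorem~\ref{thm:7} (here $x_L=x_R=0$ is the degenerate endpoint), I expect the linear part to have rank $3$ but \emph{not} rank $4$: three of the four coefficients can be controlled independently to first order, matching the fact that the local cyclicity is exactly $3$.

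The key computation is the Jacobian of $(\Delta_1^{1},\Delta_2^{1},\Delta_3^{1})$ with respect to a suitable triple of the $\varepsilon_j$, analogous to the determinant $2\pi\xi (\e^{\eta\pi})^3(\e^{\eta\pi}-1)(\e^{\eta\pi}+1)^2(\eta^2+1)$ found in Proposition~\ref{prop:10}. I would verify that this $3\times 3$ minor is nonzero for $\eta\ne0$, and that appending the row of $\Delta_4^{1}$ leaves the rank at $3$. By the Implicit Function Theorem this gives new local coordinates $(\delta_1,\delta_2,\delta_3)$ among the perturbation parameters so that $\Delta_i=\delta_i$ for $i=1,2,3$. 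The crucial point is then to check that, upon setting $\delta_1=\delta_2=\delta_3=0$, the remaining coefficient $\Delta_4$ does not vanish identically; its surviving value should reproduce, up to a nonzero factor, the weak-focus-order-$3$ quantity $-\tfrac{4\e_L^{-}}{3}(1+(\e_L^{-})^3)(1+\gamma_L^2)\gamma_L x_L^3$ from \eqref{eq:23}, evaluated along the perturbation. This yields a transversal curve of order-$3$ weak-foci emanating from the center, and as in Section~\ref{se:5} the transversality produces $3$ limit cycles of big amplitude.

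The main obstacle I anticipate is that the first-order analysis alone may be insufficient, exactly as happened for the $\eta=\xi$ case in Proposition~\ref{prop:11}. Because $x_L=x_R=0$ at the center, several coefficients $\Delta_i$ carry factors of $x_\Lambda$ and may force a degeneracy in which $\Delta_4^{1}$ vanishes on the manifold $\delta_1=\delta_2=\delta_3=0$. If that occurs, I would pass to second-order expansions $\Delta_i^{2}$ and deploy the blow-up trick from the end of Proposition~\ref{prop:11}: introduce a directional blow-up in the relevant parameters, factor out the appropriate power of the blow-up variable, and apply the Implicit Function Theorem of Loud~\cite{Lou1961} to define the last coordinate $\delta_3$ so that $\Delta_3=\varepsilon_3^2\delta_3$ (or the analogous normalization). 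Setting $\delta_3=0$ should then leave $\Delta_4$ nonvanishing for small nonzero blow-up parameter, because $\eta\ne0$. Verifying precisely which parameters to blow up, and confirming that no further cancellation destroys the nonvanishing of $\Delta_4$, is where the real care is needed.
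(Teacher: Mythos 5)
Your primary plan breaks down at its very first step. At this center the conditions $x_L=x_R=0$ and $b=0$ force $y_L=y_R=0$, and the coefficients $L_3,R_3$ in \eqref{eq:17}--\eqref{eq:18} are homogeneous of degree $2$, and $L_4,R_4$ of degree $3$, in $(x_\Lambda,y_\Lambda)$; since all four of these quantities are $O(\varepsilon)$ under your perturbation, both $\Delta_3$ and $\Delta_4$ have \emph{identically vanishing} linear parts. Indeed the paper computes $\Delta_1=\e^{\eta\pi}\pi(\varepsilon_1+\varepsilon_2)+O_2(\varepsilon)$, $\Delta_2=2\e^{\eta\pi}(\e^{\eta\pi}+1)\bigl(\varepsilon_3+\eta(\varepsilon_4+\varepsilon_5)\bigr)+O_2(\varepsilon)$, and $\Delta_3=O_2(\varepsilon)$, $\Delta_4=O_2(\varepsilon)$. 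So the rank of the linearization is $2$, not $3$: every $3\times 3$ minor containing the row of $\Delta_3^{1}$ is zero, and the Proposition~\ref{prop:10}-style argument you propose as the main route cannot produce the coordinates $(\delta_1,\delta_2,\delta_3)$. Relatedly, your framing of this center as an order-$3$ weak focus ``via statement (c) of Theorem~\ref{thm:7}'' is off: that statement requires $x_L=x_R\ne0$, and the point $x_L=x_R=0$ is precisely the degenerate limit at which the first-order analysis collapses.

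Your fallback is the route the paper actually takes, but as stated it leaves out the decisive content and aims the blow-up at the wrong parameters. After normalizing $\Delta_1=\delta_1$, $\Delta_2=\delta_2$ (rank-$2$ Implicit Function Theorem step) and restricting to $\delta_1=\delta_2=0$, the paper sets $\varepsilon_2=0$, $\varepsilon_4=\tilde\varepsilon_4+\tilde\varepsilon_5$, $\varepsilon_5=\tilde\varepsilon_4-\tilde\varepsilon_5$ and obtains $\Delta_3=-2\tilde\varepsilon_4\tilde\varepsilon_5(\eta^2+1)(\e^{2\eta\pi}-1)\e^{\eta\pi}+O_4(\tilde\varepsilon)$ and $\Delta_4=\tfrac{4}{3}\eta(\eta^2+1)\e^{\eta\pi}\tilde\varepsilon_4\bigl((\e^{3\eta\pi}+1)\tilde\varepsilon_4^2+(\e^{\eta\pi}+1)(9\e^{2\eta\pi}-6)\tilde\varepsilon_5^2\bigr)+O_4(\tilde\varepsilon)$. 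Thus the relevant blow-up lives in the perturbations of $(x_L,x_R)$, whereas $\varepsilon_3$ has already been consumed in normalizing $\Delta_2$; your guessed normalization $\Delta_3=\varepsilon_3^2\delta_3$ is the Proposition~\ref{prop:11} recipe and does not transfer. More importantly, near the origin $\Delta_3=0$ has two branches, one tangent to $\tilde\varepsilon_4=0$ and one tangent to $\tilde\varepsilon_5=0$, and because $\Delta_4$ carries the overall factor $\tilde\varepsilon_4$ it vanishes identically along the first branch --- this is exactly the ``further cancellation'' you worry about, and it genuinely occurs. The proof must discard that branch, select the one tangent to $\tilde\varepsilon_5=0$, blow up $(\tilde\varepsilon_4,\tilde\varepsilon_5)=(\tilde\varepsilon_4,\tilde\varepsilon_4\delta_3)$, and only then conclude that $\Delta_4\sim\tfrac{4}{3}\eta(\eta^2+1)\e^{\eta\pi}(\e^{3\eta\pi}+1)\tilde\varepsilon_4^3\ne0$ for small $\tilde\varepsilon_4\ne0$ since $\eta\ne0$, giving the transversal curve of third-order weak foci. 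Since your proposal defers precisely this branch analysis and factorization, it does not yet constitute a proof.
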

\begin{proof}
As in the previous two proofs, we consider a general perturbation in system~\eqref{eq:7} with
\[
(\gamma_L,\gamma_R,b,x_L,x_R)=(-\eta+\varepsilon_1,\eta+\varepsilon_2,\varepsilon_3,\varepsilon_4,\varepsilon_5).
\]	
The Taylor series of the coefficients of $\Delta_i(\eta,\xi;\varepsilon),$ defined in Section~\ref{se:4}, with respect to $\varepsilon= (\varepsilon_1,\ldots,\varepsilon_5)$ write as
\[
\begin{aligned}
\Delta_1(\eta;\varepsilon)&=\e^{\eta\pi}\pi(\varepsilon_1+\varepsilon_2)+O_2(\varepsilon),\\
\Delta_2(\eta;\varepsilon)&=2\e^{\eta\pi}(\e^{\eta\pi}+1)(\varepsilon_3+\eta(\varepsilon_4+\varepsilon_5))+O_2(\varepsilon),\\
\Delta_3(\eta;\varepsilon)&=O_2(\varepsilon),\\
\Delta_4(\eta;\varepsilon)&=O_2(\varepsilon).
\end{aligned}
\]
As before we can use Taylor series of order $1$ and the Implicit Function Theorem to define new local coordinates $(\delta_1,\varepsilon_2,\delta_2,\varepsilon_4,\varepsilon_5)$ so that $\Delta_1=\delta_1$ and $\Delta_2=\delta_2.$ Restricting our attention to the manifold $\delta_1=\delta_2=0,$ the next two coefficients write, up to order $3$ terms, as
\[
\begin{aligned}
\Delta_3(\eta;\tilde\varepsilon)&=-2\tilde\varepsilon_4\tilde\varepsilon_5(\eta^2+1)(\e^{2\eta\pi}-1)\e^{\eta\pi} +O_4(\tilde\varepsilon),\\
\Delta_4(\eta;\tilde\varepsilon)&=\frac{4}{3}\eta(\eta^2+1)\e^{\eta\pi}\tilde\varepsilon_4\big((\e^{3\eta\pi}+1)\tilde\varepsilon_4^2+(\e^{\eta\pi}+1)(9\e^{2\eta\pi}-6)\tilde\varepsilon_5^2\big)+O_4(\tilde\varepsilon),
\end{aligned}
\]
where $\tilde\varepsilon=(\tilde\varepsilon_4,\tilde\varepsilon_5)$ and we have taken $\varepsilon_2=0, \varepsilon_4 = \tilde\varepsilon_4+\tilde\varepsilon_5,$ and $\varepsilon_5 = \tilde\varepsilon_4-\tilde\varepsilon_5$ to simplify the above expressions. In the plane $(\tilde\varepsilon_4,\tilde\varepsilon_5)$ the curve $\Delta_3=0$ has, near the origin, two branches, one tangent to $\tilde\varepsilon_4=0$ and another to $\tilde\varepsilon_5=0.$ As the above Taylor series vanish over the first one, we should work with the second one, where $\Delta_3$ vanishes but not $\Delta_4,$ being $\tilde\varepsilon_4$ small but not zero. The proof finishes doing a new local change of coordinates of blow-up type $(\tilde\varepsilon_4,\tilde\varepsilon_5)=(\tilde\varepsilon_4,\tilde\varepsilon_4 \delta_3).$ Clearly, we have again for every nonzero $\eta$ a transversal curve of third-order weak-foci on the parameters space that is born at the critical value where system has a center and from which $3$ limit cycles can bifurcate.
\end{proof}

Note that in the proofs of this section, we could have computed even higher-order Taylor series in looking for higher cyclicity but our goal was to get good lower bounds without big computational effort.

\section{An explicit example}\label{se:7}

In the previous two sections, we have seen that the maximum number of limit cycles found for system~\eqref{eq:7} can bifurcate both from the weak-foci of maximal order and also from some of center families.

We finish the work with an explicit numerical example showing the existence of $3$ limit cycles of big amplitude. As the bifurcation near the centers is more degenerate, we will deal around a weak-focus of maximal order. Let us take for system \eqref{eq:7} the parameter values 
\begin{equation}\label{eq:27}
\gamma_L=-\gamma_R=-\frac18,\quad b=-\frac14,\quad x_L=x_R=1,
\end{equation}
so that from \eqref{eq:19}-\eqref{eq:22} we have $\Delta_1=\Delta_2=\Delta_3=0$, and
\[
\Delta_4=\frac{65}{384} \e^{\pi /8} \left(1+\e^{3\pi /8}\right)\approx 1.06495899308488,
\]
where from \eqref{eq:8} we have $y_L=y_R=0$. The phase portrait is depicted in Figure~\ref{fi:4}. 
\begin{figure}[h]
    \includegraphics{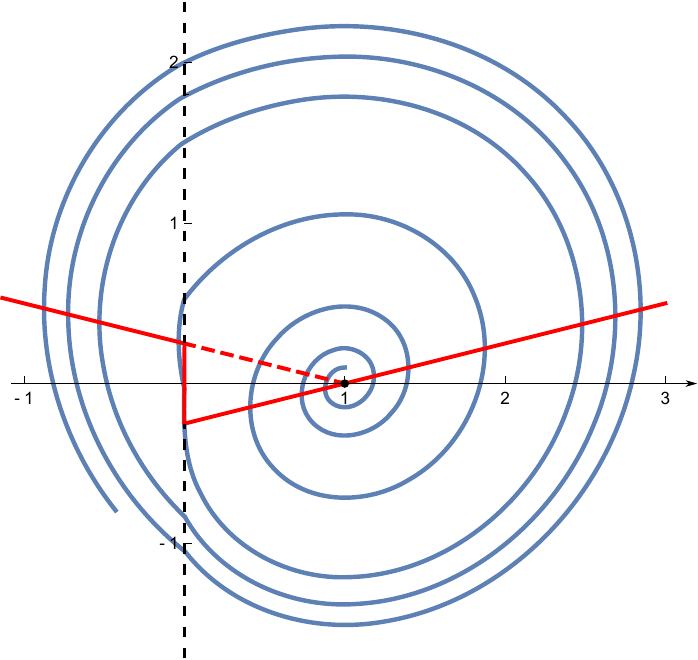} 
	\caption{Phase portrait of system~\eqref{eq:7} with the parameters values given in \eqref{eq:27}, having a periodic orbit at infinity that behaves like a weak-focus of order three. We draw in red the vertical nullclines and the sliding segment.}
	\label{fi:4}
\end{figure}  

Clearly, the chosen unperturbed system has both the virtual focus and the real one located at the same point $(1,0)$. Obviously, the perturbations providing the three limit cycles will separate them. Curiously, the first found example with three limit cycles in the family of piecewise linear differential systems separated by a straight line, which was numerically presented in \cite{HuanYang12} and later justified in \cite{LP12}, exhibited a configuration rather far from the weak-focus of order three but with the two foci located at the same point. However, such a pioneering example was not written in any canonical form; if one writes it in our Li\'enard canonical form \eqref{eq:7} then the two foci become not at the same point and appreciably distant one another.

\medskip

Coming back to our example, we consider the parametric family of perturbed systems \eqref{eq:7} with $\gamma_L=-\frac18$, $x_L=1$, and
\[
\begin{aligned}	
\gamma_R&=\frac{1}{8}+\frac{\e^{-\pi /8}}{\pi }\varepsilon_1,\\
b&=-\frac{1}{4}+\frac{1024-65\pi-(1024-195\pi)\e^{\pi /4}}{1040\pi(\e^{\pi/4}-1) \e^{\pi /8}}\varepsilon_1
+\frac{\e^{-\pi /8}}{2( \e^{\pi /8}+1)}\varepsilon_2
-\frac{8 \e^{-\pi /8}}{65( \e^{\pi /4}-1)}\varepsilon_3,\\
x_R&=1+\frac{16+65\pi-(16+195\pi)\e^{\pi /4}}{130\pi(\e^{\pi/4}-1) \e^{\pi /8}}\varepsilon_1
+\frac{64 \e^{-\pi /8}}{65 (\e^{\pi /4}-1)}\varepsilon_3,
\end{aligned}
\]			
so that we have $\Delta_i(\gamma_R,b,x_R)=\varepsilon_i+O(\varepsilon^2)$ for $i=1,2,3$, where $O(\varepsilon^2)$ represents higher-order terms in  $\varepsilon=(\varepsilon_1,\varepsilon_2,\varepsilon_3)$, obtaining a non-vanishing perturbed value for $\Delta_4$. Thus, we have a complete unfolding in the $3$-parameter space $(\gamma_R,b,x_R)$ for a neighborhood of the critical point $(\gamma_R,b,x_R)=(1/8,-1/4,1)$, which represents a weak-focus of order 3 for the periodic orbit at infinity.

In particular, for the concrete perturbed system with 
\[
\gamma_L=-\frac18, \quad x_L=1, \quad \gamma_R=\frac{1638355}{13106841},\quad b=-\frac{260534}{1045519},\quad x_R=
\frac{552751}{556327},
\]
we get
\[
y_L=-\frac{3383}{4182076}\approx -0.000808928,\quad y_R=-\frac{6084083513535748}{7623599948859945633}\approx -0.000798059.
\]
After using the relations \eqref{eq:5} and \eqref{eq:6} to get $\alpha_R$ and $\alpha_L$, the fourth-degree truncation of function $\Delta(u_0)$ becomes 
\[-4.43719886 \cdot 10^{-8} u_0+3.993655760\cdot 10^{-5} u_0^2-1.15001344\cdot 10^{-2} u_0^3+1.054869499 u_0^4,
\]
which has three simple positive zeros at 
\[
\{0.002467460261, 0.003358360933, 0.005076128658\}
\]
with reciprocal values (in reverse order)
\[
\{197.00052293,297.76430224,405.27501730\}.
\]
Accordingly,  system \eqref{eq:7} with such perturbed parameter values has three limit cycles, whose intersection points $(0,y_i)$ with the positive $y$-axis have the $y_i$ ordinates
\[
\{196.89979358,297.91820638,405.21567427\},
\]
which are very close to the reciprocals of zeros for the fourth-order truncated function for $\Delta$, whose graph is drawn in Figure~\ref{fi:5}.
\begin{figure}[h]
	\includegraphics{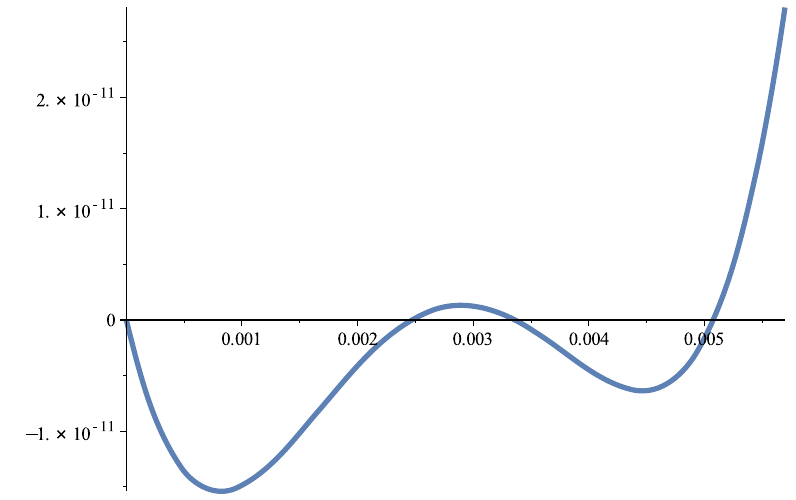}
	\caption{The graph of the fourth-degree truncation of the function $\Delta(u_0)$ for the numerical example obtained after perturbing the weak-focus of order 3 given in \eqref{eq:27}.}
	\label{fi:5}
\end{figure}  

As a final remark, it should be emphasized that only thanks to the theoretical analysis developed in this work it has been possible to detect the above example. Of course, many other analogous examples could now be built without extra effort.

\section{Acknowledgements}
This work has been realized thanks to Consejer\'{\i}a de Econom\'{\i}a y Conocimiento de la Junta de Andaluc\'{\i}a (P12-FQM-1658 grant), Ag\`encia de Gesti\'o d'Ajuts Universitaris i de Recerca de Catalunya (2017 SGR 1617 grant), Spanish Ministerio de Ci\'encia, Innovaci\'on y Universidades - Agencia Estatal de Investigaci\'on (MTM2016-77278-P (FEDER), MTM2017-87915-D2-1-P, PGC2018-096265-B-I00, and PID2019-104658GB-I00 grants), and European Community (H2020-MSCA-RISE-2017-777911 grant).

\def\cprime{$'$}

\end{document}